\newtheorem{theorem}{Theorem}
\newtheorem{lem}{Lemma}
\newtheorem{prop}[theorem]{Proposition}
\theoremstyle{definition}
\newtheorem{defn}{Definition}
\newtheorem{exmp}{Example}
\newcommand{\supp}{\operatorname{supp}} 
\begin{document}

\subjclass{57M25, 57R58}

\keywords{(1,1) knot, 1-bridge torus knot, Heegaard diagram, Floer homology, train track}

\title[Doubly-pointed Heegaard diagrams compatible with (1,1) knots]{Constructing doubly-pointed Heegaard diagrams compatible with (1,1) knots}

\author{Philip Ording}
\address{\hskip-\parindent
Philip Ording\\
  Department of Mathematics\\
Medgar Evers College, CUNY.}
\email{pording@mec.cuny.edu}

\begin{abstract}
A $(1,1)$ knot $K$ in a $3$-manifold $M$ is a knot that intersects each solid torus of a genus 1 Heegaard splitting of $M$ in a single trivial arc.
Choi and Ko developed a parameterization of this family of knots by a four-tuple of integers, which they call Schubert's normal form.
This article presents an algorithm for constructing a doubly-pointed Heegaard diagram compatible with $K$, given a Schubert's normal form for $K$.
The construction, coupled with results of Ozsv{\'a}th and Szab{\'o}, provides a practical way to compute knot Floer homology groups for $(1,1)$ knots.
The construction uses train tracks, and its method is inspired by the work of Goda, Matsuda and Morifuji.
\end{abstract}

\thanks{Research partially supported by a PSC-CUNY grant}
\date{\today}
\maketitle

\section{Introduction}

Some objects of $3$-dimensional topology can be usefully characterized by objects of $2$-dimensional topology. 
A significant example is a Heegaard splitting, 
which is a closed $3$-manifold obtained by identifying the boundaries of two genus $g$ handlebodies by a homeomorphism.
The homeomorphism is completely determined by two sets $\alpha,\beta$ of $g$ curves in the splitting surface $\Sigma$, and we study the $3$-manifold in terms of the \emph{Heegaard diagram} $(\Sigma,\alpha,\beta)$.
The utility of Heegaard diagrams extends, by a slight modification, from manifolds to knots in manifolds. 
By marking two points $x,y\in \Sigma-\alpha-\beta$ in a given Heegaard diagram $(\Sigma,\alpha,\beta)$ for a $3$-manifold $M$, we specify a unique knot $K\subset M$.
The knot is comprised of the union of the two trivial arcs on either side of $\Sigma$ that join $x$ and $y$ while avoiding meridian disks bounded by $\alpha,\beta$. 
We say $(\Sigma,\alpha,\beta,x,y)$ is a \emph{doubly-pointed Heegaard diagram compatible with $K$}. 
This article adresses the problem of constructing a genus 1 doubly-pointed Heegaard diagram which is compatible with a given knot in the three-sphere.

\begin{figure}[ht!] 
\labellist 
\footnotesize\hair 2pt 
\pinlabel $\Sigma$ at 133 152
\pinlabel $\beta$ at 166 100
\pinlabel $\alpha$ at 294 7
\pinlabel $D_\beta$ at 336 137
\pinlabel $D_\alpha$ at 282 46
\pinlabel $t_\beta$ at 131 25
\pinlabel $t_\alpha$ at 228 175
\endlabellist 
\centering 
\includegraphics[width=\textwidth]{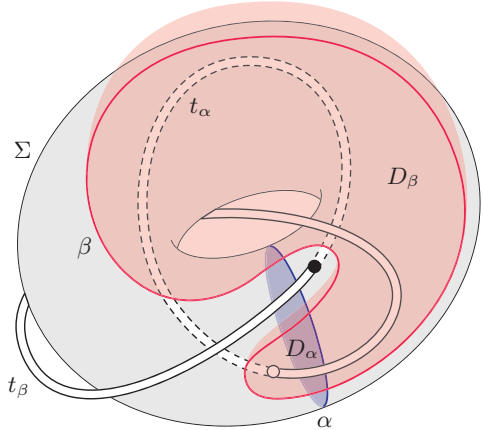}
\caption{A doubly-pointed Heegaard diagram compatible with the trefoil\label{fig:trefoil}}
\end{figure}

The motivation behind this question comes from the knot Floer homology developed by Ozsv{\'a}th and Szab{\'o}  and, independently, Rasmussen.
Knot Floer homology is a robust invariant; it produces the Alexander polynomial, the Seifert genus, fiberedness, a concordance invariant, uknot detection, and information about Dehn surgeries on a knot. 
For an introduction to the theory, see ~\cite{Ozsvath-Szabo:Intro}. 
The basic input for knot Floer homology is a Heegaard knot diagram, and Ozsv{\'a}th and Szab{\'o} showed that, in the case of doubly-pointed genus 1 Heegaard diagrams, the computation of knot Floer homology is combinatorial.
Although Manolescu, Ozsv\'ath and Sarkar found a combinatorial algorithm to calculate the Floer homology of any knot, the invariants are computationally more accessible given a genus 1 doubly-pointed Heegaard diagram for a knot as opposed to a multi-point and multi-curve Heegaard diagram that are used in the more general setting. 

Goda, Matsuda and Morifuji recognized that the set of knots admitting genus 1 doubly-pointed Heegaard diagrams correspond to $(g,b)$ knots with $g=b=1$ \cite{Goda-Matsuda-Morifuji}.
A $(1,1)$ knot $K$ in a $3$-manifold $M$ is a knot that intersects each solid torus of a genus 1 Heegaard splitting of $M$ in a single trivial arc.
The $(1,1)$ knots form a large family.
The torus knots and 2-bridge knots are proper subsets of the set of $(1,1)$ knots, and Fujii has shown that every Laurent polynomial satisfying the properties of an Alexander polynomial appears as the Alexander polynomial of some $(1,1)$ knot \cite{Fujii}.
Goda, Matsuda and Morifuji constructed numerous examples of genus 1 doubly-pointed Heegaard diagrams compatible with $(1,1)$ knots whose Floer homology was unknown, including several knots up to 10 crossings and certain pretzels \cite{Goda-Matsuda-Morifuji}. 
In each example, the attaching circle $\alpha$ remains fixed while $\beta$ is constructed via a sequence of steps beginning from a genus 1 doubly-pointed Heegaard diagram compatible with the trivial knot.  
Each step involves an isotopy of one of the trivial arcs and a simultaneous deformation of $\beta$, both of which are described by careful illustrations.
Using Schubert form, a parameterization of $(1,1)$ knots due to Choi and Ko \cite{Choi-Ko}, we are able to generalize Goda, Matsuda and Morifuji's examples to obtain an algorithm which constructs a genus 1 doubly-pointed Heegaard diagram for any $(1,1)$ knot with a given a Schubert form (Theorem \ref{thm:construction}).

\begin{figure} 
\labellist 
\footnotesize\hair 2pt 
\pinlabel $x$ at 257 107
\pinlabel $y$ at 245 51
\pinlabel $r$ at 139 94
\pinlabel $r$ at 382 62
\pinlabel $s$ at 140 77
\pinlabel $t$ at 131 24
\endlabellist 
\centering 
\includegraphics[width=\textwidth]{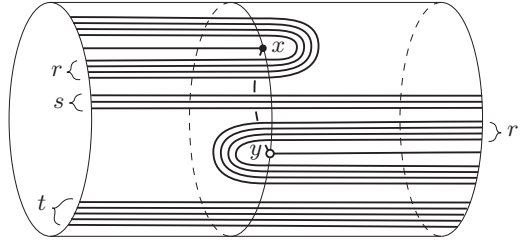}
\caption{Schubert form\label{fig:schubert}
}
\end{figure}

Rasmussen defined an integer 4-tuple   parameterization $K(p,q,r,s)$ of genus 1 doubly-pointed Heegaard diagrams ~\cite{Rasmussen:KnotHomologies}, and Doyle developed a computer program based on \cite{Goda-Matsuda-Morifuji} to compute the knot Floer homology groups up to relative Maslov grading for a given 4-tuple ~\cite{Doyle}. 
We use the complementarity between the $(1,1)$ knot trivial arcs and Heegaard diagram attaching circles to define an integer 4-tuple parameterization of genus 1 doubly-pointed Heegaard diagrams. 
Given a diagram resulting from our construction, we compute this parameterization, which we call Heegaard diagram Schubert form $HS(r',s',t',\rho')$ (Theorem \ref{thm:heegform}).
Using the correspondence below, which follows by direct inspection, we obtain the Rasmussen parameters required for Doyle's algorithm: 
$$
HS(r',s',t',\rho')=K(2r'+s'+t',r',s',-\rho'\mod{2r'+s'+t'}).
$$
Hedden observed that each step in the construction of Theorem \ref{thm:construction} either preserves or increases the total rank of the Floer homology ~\cite{Hedden:Private}. 
In particular, the absolute Maslov grading of the generators is unchanged throughout the construction of the Heegaard diagram. 
This continuity in the algorithm extends from the absolute grading of the unknot in the first step to the absolute Maslov grading for the knot whose Heegaard diagram appears in the final step.
This feature makes it possible to take advantage of Doyle's software and calculate the knot Floer homology groups exactly, including the absolute Maslov grading. 
The results of this paper and Doyle's software were employed in ~\cite{Ording}, to investigate the Floer invariants for satellite (1,1) knots.

The paper is organized as follows.
Section 2 summarizes the basic terminology of Heegaard diagrams, Schubert form, and train tracks, which offer a particularly convenient way to characterize curves in a surface.
Section 3 defines Schubert form in terms of train tracks.
In Section 4 we divide a $(1,1)$ knot into increments by which the Heegaard diagram construction of Section 5 proceeds. 
Section 6 computes the Heegaard diagram Schubert form. 
Examples 1--3 demonstrate all results in the simplest nontrivial case, the trefoil.
 
\subsection*{Acknowledgements} 
Matthew Hedden brought my attention to the problem of constructing Heegaard diagrams compatible with $(1,1)$ knots, and this research benefited from conversations with him, Walter Neumann, Peter Ozsv\'{a}th, Jacob Rasmussen, and Saul Schleimer.

\section{Preliminaries}\label{sec:prelims}

\subsection{Heegaard diagrams} 
A Heegaard splitting is a closed $3$-manifold $H_\alpha\cup_h H_\beta$ obtained by identifying the boundaries of two genus $g$ handlebodies $H_\alpha$ and $H_\beta$ by a homeomorphism $h:\partial H_\beta\to \partial H_\alpha$.
Let $\alpha\subset \partial H_\alpha$ and $\beta\subset\partial H_\beta$ be sets of $g$ meridional curves of $H_\alpha$ and $H_\beta$, respectively. 
The homeomorphism $h$ is determined (up to isotopy) by the attaching circles $h(\beta)\subset \partial H_\alpha$, (see, for example, Singer ~\cite{Singer}).  
A \emph{Heegaard diagram} $(\Sigma,\alpha,\beta)$ consists of the splitting surface $\Sigma=\partial H_\alpha=\partial H_\beta$  and the curves $\alpha$ and $h(\beta)$ in $\Sigma$.

Given a Heegaard diagram $(\Sigma,\alpha,\beta)$ for a $3$-manifold $M$, we specify a knot $K\subset M$ uniquely by marking two points $x,y\in \Sigma-\alpha-\beta$.
To identify $K$, let $D_\alpha\subset H_\alpha$ be a set of pairwise disjoint properly embedded disks, the boundaries of which correspond to the set of meridians $\alpha$.
A properly embedded arc $\gamma$ in a $3$-manifold $(M,\partial M)$ is trivial if there is an embedded disk $D$ such that $\partial D\cap\gamma=\gamma$ and $\partial D\cap\partial M$ is the arc $\partial D\setminus \text{int}(\gamma)$. 
There is a unique (up to isotopy) trivial arc $t_\alpha\subset H_\alpha-D_\alpha$ spanning $x,y$, since the complement of $D_\alpha$ in $H_\alpha$ is homeomorphic to a ball.  
Define $t_\beta$ similarly. 
If the closed curve $t_\alpha\cup t_\beta$ has knot type $K$, the $(\Sigma,\alpha,\beta,x,y)$ is a called a \emph{doubly-pointed Heegaard diagram compatible with $K$}. 

Figure \ref{fig:trefoil} depicts a genus 1 doubly-pointed Heegaard diagram compatible with the trefoil (in the three-sphere).

\subsection{(1,1) knots}
Doll defined a knot $K$ in a $3$-manifold $M$ to be \emph{genus-$g$ bridge-$b$}, or, simply $(g,b)$ if there is a genus $g$ Heegaard splitting of $M$ such that $K$ intersects each handlebody in $b$ trivial arcs \cite{Doll}.
Call $(H_\alpha,t_\alpha)\cup_h(H_\beta,t_\beta)$ a $(1,1)$ decomposition of a $(1,1)$ knot $K\subset M$ if $H_\alpha\cup_h H_\beta$ is a genus $1$ Heegaard splitting of $M$, $t_\alpha\subset H_\alpha$ and $t_\beta\subset H_\beta$ are properly embedded trivial arcs, and $K=t_\alpha\cup t_\beta$.
 
Choi and Ko \cite{Choi-Ko} demonstrated that
every $(1,1)$ knot is represented by an integer 4-tuple $(r,s,t,\rho)$, where $r$, $s$, and $t$ are non-negative.
In their presentation, called \emph{Schubert's normal form} (or simply \emph{Schubert form}), they isotope the arc $t_\beta$ of a $(1,1)$ decomposition into the  torus $\partial H_\alpha$. 
Then they prove that such an arc in the torus is isotopic to an arc in the form depicted in Figure \ref{fig:schubert}, where opposite ends of the cylinder are identified by a $\frac{2\pi\rho}{2r+s+t+1}$-rotation.
The parameters $r,s,t$ count the number of strands in each parallel class.
The arc $t_\alpha$ belongs to a ``standard'' meridian disk of the solid torus $H_\alpha$, as shown in Figure \ref{fig:schubert}.
The symbol $S(r,s,t,\rho)$ denotes the knot type of $t_\alpha\cup t_\beta$. 
Note that Schubert form is not unique;
for example, $S(0,0,2,2)$ and $S(1,1,0,-1)$ are identical trefoils.
 
\subsection{Train tracks}

Thurston originally introduced train tracks to study geodesic laminations of hyperbolic surfaces ~\cite{Thurston}, but we use the terminology merely in terms of the combinatorics of doubly-pointed Heegaard diagrams in the torus.
The following definitions are drawn from Penner and Harer \cite{Penner-Harer}, and we also rely, in part, on the exposition by Masur, Mosher, and Schleimer ~\cite{Masur-Mosher-Schleimer}. 

A \emph{train track with stops} (or simply a \emph{track}) is a graph $\tau$ that is properly embedded in a surface $\Sigma$ and satisfies the following conditions.
Edges of a train track, which are called \emph{branches}, are assumed to be smoothly embedded, and at each vertex the incident edges are mutually tangent. 
A vertex from which branches emanate in two directions (``incoming'' and ``outgoing'') is called a \emph{switch}, and a \emph{stop} is a vertex from which branches emanate in one direction.
Each vertex of a train track is either a switch in the interior of $\Sigma$ or a stop in $\partial \Sigma$.
There is at most one stop in each boundary component of the surface, and the one-sided tangent vector of each branch incident on a stop is transverse to $\partial \Sigma$.
We assume that at each switch exactly three branches meet, a pair of \emph{small} branch ends to one side and one \emph{large} branch end to the other.

Let $\mathcal{B}=\mathcal{B}(\tau)$ be the set of all branches of a track $\tau$. A \emph{tie neighborhood} $N(\tau)\subset \Sigma$ of $\tau$
is a union of rectangles $\{R_b:b\in\mathcal{B}\}$, each of which is foliated by vertical intervals, called \emph{ties}, that are transverse to $\tau$.
At each switch, the upper and lower thirds of the vertical side of the large branch end rectangle is identified with the two vertical sides of the small branch end rectangles.
It is required that each component of $\Sigma\setminus N(\tau)$ has at least one corner and the following index has a negative value on the component:
$$
\chi-\frac{1}{4}\#\,\text{outward-pointing corners}+\frac{1}{4}\#\,\text{inward-pointing corners}, 
$$
where $\chi$ is the Euler characteristic.
In particular, no branch is homotopic to another branch or a boundary component.

If $\gamma$ is a curve contained in $N(\tau)$ and transverse to the ties, then $\gamma$ is said to be \emph{carried} by $\tau$, and we write $\gamma\prec\tau$.
A \emph{transverse measure} $w$ on $\tau$ is a function which assigns a non-negative number, called a \emph{weight}, to each branch in such a way that the sums of the weights on the incoming and outgoing branches at each switch are equal.
Given an ordering on the branch set $\mathcal{B}=\{b_1,\dots,b_n\}$, the following notation will be convenient: $(w_1,\dots,w_n):=(w(b_1),\dots,w(b_n))$.
A collection of curves $\gamma\prec\tau$ defines a transverse measure on $\tau$, $w_\gamma(b):=|\gamma\cap t|$, where $t$ is any tie of $R_b$.
We call $w_\gamma$ the \emph{counting measure of $\gamma$ on $\tau$}.
Conversely, to every non-negative integer-valued function $w$ on $\mathcal{B}$ satisfying the switch condition, there is a collection of curves $\gamma\prec\tau$ with $w=w_\gamma$.
An \emph{extension} of a train track $\tau$ is a train track $\sigma$ which contains $\tau$ as a subset, and we write $\tau < \sigma$ (we also say $\tau$ is a \emph{subtrack} of $\sigma$).
The \emph{support} of an arc $\gamma\prec\sigma$ is the smallest subtrack $\supp\gamma<\sigma$ that carries $\gamma$.

\begin{figure}[ht!] 
\labellist 
\footnotesize\hair 2pt 
\pinlabel $b_1$ at 65 150
\pinlabel $b_2$ at 70 116
\pinlabel $b_3$ at 132 136
\pinlabel $b_1$ at 285 136
\pinlabel $b_2$ at 343 150
\pinlabel $b_3$ at 344 116
\pinlabel $b_1$ at 38 37
\pinlabel $b_2$ at 6 24
\pinlabel $b_3$ at 56 9
\pinlabel $b_1$ at 184 42
\pinlabel $b_2$ at 152 15
\pinlabel $b_3$ at 220 15
\pinlabel $b_1$ at 329 37
\pinlabel $b_2$ at 310 9
\pinlabel $b_3$ at 362 25
\endlabellist 
\centering 
\includegraphics[width=4in]{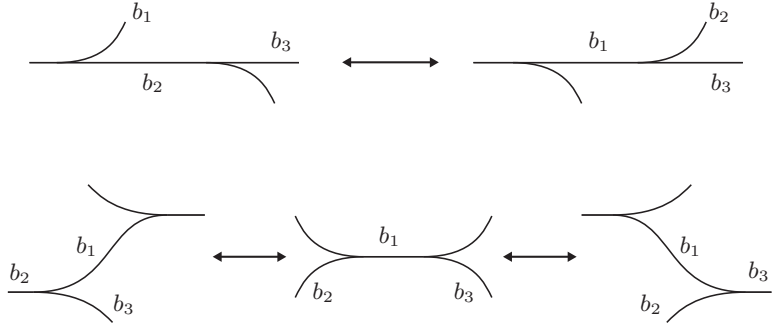}
\caption{Train track moves.\label{fig:moves}}
\end{figure}

Two measured train tracks $(\tau,w)$, $(\tau',w')$ are \emph{equivalent} if they are related by a sequence of isotopies and the following local moves. 
A \emph{left splitting} or \emph{right splitting} replaces a branch with large ends (a \emph{large branch}) by a branch with small ends (a \emph{small branch}).
Suppose that $w$ is a transverse measure on the train track $\tau$ before splitting at the branch $b_1$, shown in Figure \ref{fig:moves}, bottom center.
If $w_2\geq w_3$ ($w_2\leq w_3$), then $w$ induces a transverse measure $w'$ on the train track $\tau'$ resulting from the left (right) splitting of $\tau$ at $b_1$, and $w'_1$ equals $w_2-w_3$ ($w_3-w_2$), shown in Figure \ref{fig:moves}, bottom left (right). 
A \emph{slide} alters a branch having one large and one small end (a \emph{mixed} branch) according to Figure \ref{fig:moves}, top.
For a slide move, $w_1'=w_2+w_3$.
All other branch weights are unchanged.
The inverse of a slide is a slide, and the inverse of a splitting is called a \emph{fold}.

\section{Schubert train track}
 
Let $\Sigma$ be the closed oriented surface of genus 1.
Fix once and for all an oriented meridian-longitude pair $\mu, \lambda\subset\Sigma$ and a point $x\in\mu\setminus\lambda$.
Let $y:=\mu\cap\lambda$.
We will represent $\Sigma$ by the fundamental polygon $P=\lambda\mu\lambda^{-1}\mu^{-1}$, which is a rectangle with horizontals homologous to $\lambda$, oriented from right to left, and verticals homologous to $\mu$, oriented from bottom to top. 
Suppose $\hat{\Sigma}=\Sigma-D_x-D_y$, where $D_x$, $D_y\subset\Sigma$ are small disks whose boundary contains $x,y$, respectively.

\begin{defn} 
The \emph{Schubert train track} $\sigma$ denotes one of three train tracks with stops $\sigma_{--}$, $\sigma_-$, $\sigma_+$ in $\hat{\Sigma}$ that appear in Figure \ref{fig:traintrack}. 
The boundary component $D_x$ ($D_y$) and stop $x$ ($y$) are depicted by the small black (white) circle. 
\end{defn}

\begin{figure}[ht!] 
\labellist 
\footnotesize\hair 2pt 
\pinlabel $b_1$ at 21 98 
\pinlabel $b_2$ at 22 68
\pinlabel $b_3$ at 90 105
\pinlabel $b_4$ at 68 73
\pinlabel $b_5$ at 125 95
\pinlabel $\sigma_{--}$ at 87 5  
\pinlabel $b_1$ at 197 98
\pinlabel $b_2$ at 196 68
\pinlabel $b_3$ at 267 110
\pinlabel $b_4$ at 236 76 
\pinlabel $b_5$ at 300 95
\pinlabel $\sigma_{-}$ at 260 5
\pinlabel $b_1$ at 372 98
\pinlabel $b_2$ at 370 68
\pinlabel $b_3$ at 410 119
\pinlabel $b_4$ at 454 79 
\pinlabel $b_5$ at 476 95
\pinlabel $\sigma_{+}$ at 435 5 
\endlabellist 
\centering 
\includegraphics[width=\textwidth]{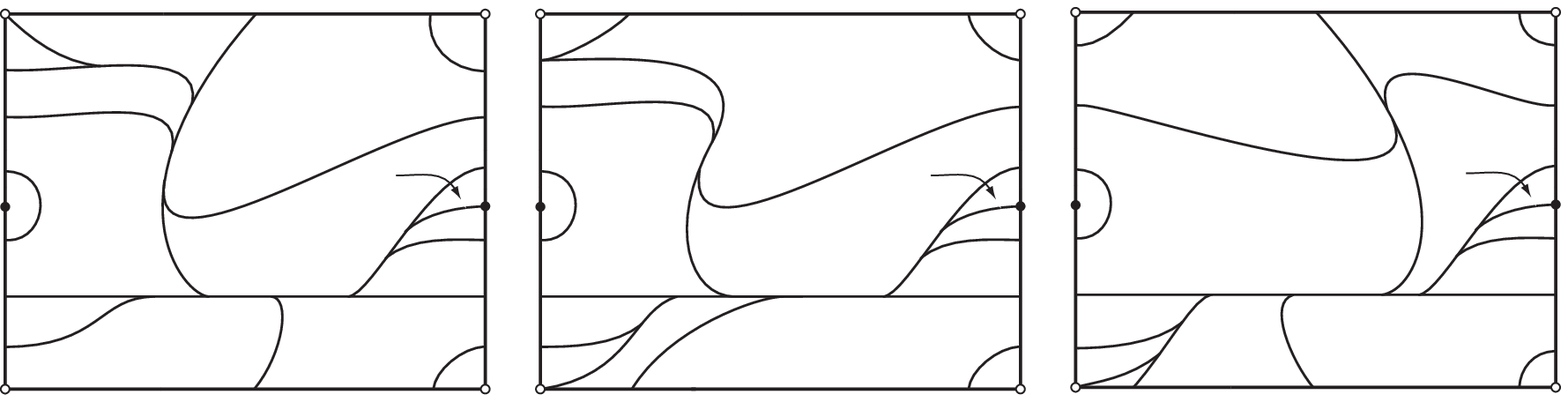} 
\caption{Schubert train track.\label{fig:traintrack}}
\end{figure} 

\begin{lem}\label{lem:traintrack}
The trivial arc $t_\beta$ of a $(1,1)$ knot with Schubert form $S(r,s,t,\rho)$ is carried by $\sigma$, where $\sigma$ is the Schubert train track $\sigma_{--}$ if $\rho<-r$, $\sigma_-$ if $-r\leq\rho<0$, and $\sigma_+$ if $\rho\geq 0$. 
The counting measure $w=w_{t_\beta}$ of $t_\beta$ on $\sigma$ is 
$$(w_1,w_2,w_3,w_4,w_5)=(r,s,t,|\rho|,1).$$
\end{lem}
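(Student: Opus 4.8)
The plan is to verify the statement by direct inspection: place the Schubert-form arc $t_\beta$ explicitly in the fundamental polygon $P$, overlay the appropriate track $\sigma$, and check the two assertions—carrying and the counting measure—branch by branch.

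First I would redraw the arc of Figure \ref{fig:schubert} inside $P=\lambda\mu\lambda^{-1}\mu^{-1}$, identifying the two ends of the cylinder with the left and right copies of $\mu$ glued by the $\tfrac{2\pi\rho}{2r+s+t+1}$-rotation. Before gluing, the arc meets $\mu$ in $2r+s+t+1$ points and its strands fall into the three parallel classes counted by $r$, $s$, $t$ of Figure \ref{fig:schubert}. The rotation gluing then carries the strands across $\mu$ with a shift of $\rho$ positions, producing a fourth bundle of $|\rho|$ strands that wrap around the polygon. I would fix the positions of $\partial D_x,\partial D_y$ so that the stops of $\sigma$ sit at the two endpoints of the arc.

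Next I would exhibit the tie neighborhood $N(\sigma)$ and isotope $t_\beta$ into it transverse to the ties. The essential point is that each bundle of strands runs monotonically across the rectangle of a single branch—the $r$ strands through $R_{b_1}$, the $s$ through $R_{b_2}$, the $t$ through $R_{b_3}$, and the $|\rho|$ wrapping strands through $R_{b_4}$—without any strand reversing direction, so that none becomes tangent to a tie; the one remaining strand occupies $R_{b_5}$. This establishes $t_\beta\prec\sigma$, and the counting measure is then read off a single tie of each rectangle, yielding $(w_1,\dots,w_5)=(r,s,t,|\rho|,1)$. As a consistency check I would verify that these weights satisfy the switch condition at every switch of $\sigma$, confirming that they define a genuine transverse measure.

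The main obstacle is the trichotomy on $\rho$, which is exactly where the combinatorial type of $\sigma$ changes. The sign of $\rho$ records the direction of the gluing rotation and hence the side on which the bundle $b_4$ wraps, distinguishing $\sigma_+$ (the case $\rho\ge 0$) from the two negative cases. When $\rho<0$ I would compare the magnitude of the rotation against the $r$ strands of the first class: if $-r\le\rho<0$ the returning strands stay within the pattern of $\sigma_-$, whereas if $\rho<-r$ they overtake the first class and force the additional crossing recorded by $\sigma_{--}$, the threshold occurring exactly at $\rho=-r$. Establishing that the three inequalities $\rho\ge 0$, $-r\le\rho<0$, $\rho<-r$ correspond precisely to $\sigma_+$, $\sigma_-$, $\sigma_{--}$ requires tracking the rotation carefully through the fundamental domain and confirming, at the crossing near $b_4$, that the resulting track has the asserted combinatorial type in each regime.
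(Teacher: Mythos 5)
Your approach is genuinely different from the paper's. You propose to verify the carrying claim by direct inspection: draw the Schubert-form arc in the fundamental polygon, overlay $\sigma$, isotope $t_\beta$ into a tie neighborhood $N(\sigma)$, and read the weights off the ties. The paper instead proceeds in two stages: it first observes that $t_\beta$ is visibly carried by an intermediate track $\theta$ (its $\theta_\pm$ of Figure \ref{fig:theta}), which is nothing but your ``redraw Figure \ref{fig:schubert} inside $P$'' step with counting measure $(r,s,t,|\rho|,1)$, and then transforms $(\theta,v)$ into $(\sigma,w)$ by an explicit sequence of slides and splittings, letting the measure propagate through the moves. The payoff of the paper's route is that the trichotomy you identify as ``the main obstacle'' is not an obstacle at all: a large branch admits a left splitting when $w_2\geq w_3$ and a right splitting when $w_2\leq w_3$, so the case divisions $\rho\geq 0$ versus $\rho<0$, and then $\rho<-r$ versus $-r\leq\rho<0$, fall out mechanically from comparing the two explicit weights adjacent to the large branch at each of the two splitting stages, with the new weight given by the difference formula for splittings.

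This is also where your proposal has a real gap. You correctly flag that the thresholds $\rho=0$ and $\rho=-r$ are the crux, but you do not supply an argument for them; you assert that for $\rho<-r$ the returning strands ``overtake the first class'' and that the threshold occurs ``exactly at $\rho=-r$,'' and then say this ``requires tracking the rotation carefully.'' That tracking is precisely the content of the lemma, and without a mechanism that produces the inequality $|\rho|\lessgtr r$ from the picture, the case analysis is unproved. Likewise, the claim that $t_\beta$ can be isotoped into $N(\sigma)$ transverse to the ties is exactly what needs justification, since $\sigma$ is not the track one sees immediately from Figure \ref{fig:schubert}; the paper's slides and splittings are the controlled isotopies that accomplish this. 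If you want to complete your version, the cleanest fix is to adopt the paper's device: pass through the intermediate track where the carrying is obvious, and let the splitting criterion on weights generate the thresholds for you.
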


\begin{proof}
From the definition of Schubert form $S(r,s,t,\rho)$, it follows that $t_\beta\prec\theta$, where $\theta$ is the train track with stops $\theta_-$ if $\rho<0$ and $\theta_+$ if $\rho\geq 0$, as shown in Figure \ref{fig:theta}.

\begin{figure}[ht!] 
\labellist 
\footnotesize\hair 2pt 
\pinlabel $b_1$ at 194 90
\pinlabel $b_2$ at 207 73
\pinlabel $b_3$ at 149 37
\pinlabel $b_4$ at 105 79
\pinlabel $b_5$ at 145 90
\pinlabel $\theta_-$ at 165 10
\pinlabel $b_1$ at 399 90
\pinlabel $b_2$ at 412 73
\pinlabel $b_3$ at 351 37
\pinlabel $b_4$ at 312 79
\pinlabel $b_5$ at 348 90
\pinlabel $\theta_+$ at 367 10
\endlabellist 
\centering 
\includegraphics[width=\textwidth]{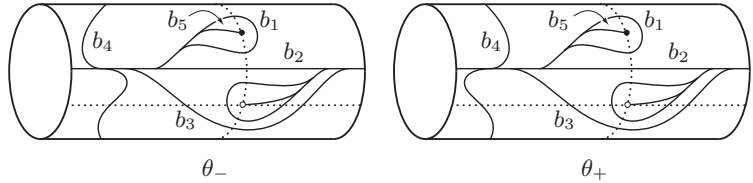}
\caption{Schubert form as measured train track with stops.\label{fig:theta}}
\end{figure}

Moreover, the counting measure $v=v_{t_\beta}$ of $t_\beta$ on $\theta$ is $(v_1,\dots,v_5)=(r,s,t,|\rho|,1)$. 
The equivalence of the measured train tracks $(\theta,v)$ and $(\sigma,w)$, results from the following sequence of train track moves taking $\theta$ to $\sigma$. 

\begin{figure}[ht!] 
\labellist 
\footnotesize\hair 2pt 
\pinlabel $b_1$ at 21 98 
\pinlabel $b_2$ at 22 65
\pinlabel $b_3$ at 99 121
\pinlabel $b_4$ at 58 77
\pinlabel $b_5$ at 125 95
\pinlabel $\theta'_{-}$ at 87 5  
\pinlabel $b_1$ at 195 98
\pinlabel $b_2$ at 196 65
\pinlabel $b_3$ at 267 119
\pinlabel $b_4$ at 232 79 
\pinlabel $b_5$ at 300 95
\pinlabel $\theta''_{-}$ at 260 5
\pinlabel $b_1$ at 369 98
\pinlabel $b_2$ at 368 65
\pinlabel $b_3$ at 444 121
\pinlabel $b_4$ at 402 79 
\pinlabel $b_5$ at 476 95
\pinlabel $\theta'''_{-}$ at 435 5 
\endlabellist 
\centering 
\includegraphics[width=\textwidth]{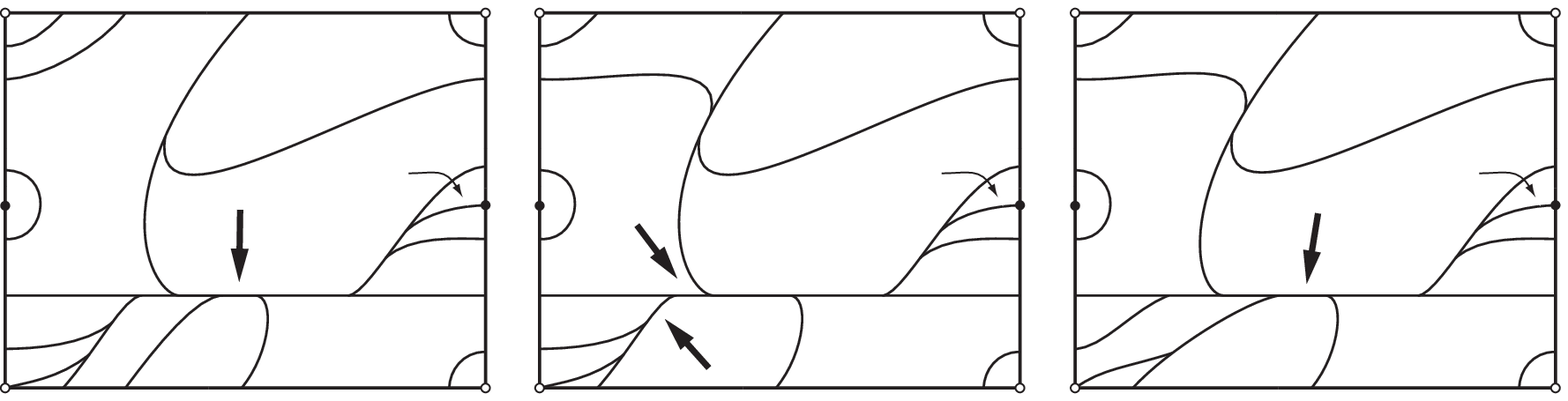} 
\caption{Sliding and folding $\theta$.\label{fig:flat}}
\end{figure} 

First, apply slide moves at the two mixed branches of $\theta$ that are adjacent to branches $b_3$ and $b_4$.
The resulting train track $\theta'$ has a single large branch, which, for the case $\rho<0$, appears in Figure \ref{fig:flat}, left. 
Suppose $v'$ is the counting measure of $t_\beta$ on $\theta'$ induced by $v$.
If $\rho\geq 0$, then $v'$ induces a measure $v''$ on the left splitting of $\theta'$ at its large branch, and the resulting train track is $\sigma_+$.
If $\rho<0$, $v'$ induces $v''$ on the right splitting, and the resulting train track is $\theta''_-$ shown in Figure \ref{fig:flat}, center.
Apply slide moves at the two branches indicated in the figure by the large arrows, so that the result is $\theta'''_-$ that is shown in Figure \ref{fig:flat}, right.
If $\rho<-r$, the measure $v'''$ on $\theta'''_-$, induced by $v''$, induces the measure $w$ on the left splitting, which results in $\sigma_{--}$. 
If $-r\leq\rho<0$, $v'''$ induces the measure $w$ on the right splitting, which results in $\sigma_-$.
\end{proof}

The Schubert train track has ten switches and over a dozen branches, yet for $t_\beta\prec\sigma$, a component of $t_\beta\setminus (\mu\cup\lambda)$ belongs to one of at most five parallel classes of subarcs in the fundamental polygon $P$.
The following lemma simplifies $\sigma$ in a way that will facilitate the parameterization of $t_\beta$ taken up in the next section.
   
\begin{figure}[ht!] 
\labellist 
\footnotesize\hair 2pt 
\pinlabel $b_1$ at 102 191
\pinlabel $b_2$ at 110 233
\pinlabel $b_3$ at 159 246
\pinlabel $b_4$ at 153 202
\pinlabel $b_5$ at 144 225
\pinlabel $b_6$ at 236 225
\pinlabel $\sigma'_{--}$ at 173 162 
\pinlabel $b_2$ at 279 191
\pinlabel $b_3$ at 284 233
\pinlabel $b_4$ at 335 246
\pinlabel $b_1$ at 328 202
\pinlabel $b_5$ at 315 231
\pinlabel $b_6$ at 411 225
\pinlabel $\sigma'_{-}$ at 344 162 
\pinlabel $b_3$ at 102 131
\pinlabel $b_2$ at 115 83
\pinlabel $b_1$ at 161 70
\pinlabel $b_4$ at 150 119
\pinlabel $b_5$ at 125 113
\pinlabel $b_6$ at 234 77
\pinlabel $\sigma'_{0}$ at 173 9
\pinlabel $b_2$ at 277 131
\pinlabel $b_1$ at 290 83
\pinlabel $b_4$ at 336 70
\pinlabel $b_3$ at 326 119
\pinlabel $b_5$ at 319 94
\pinlabel $b_6$ at 409 77
\pinlabel $\sigma'_{+}$ at 345 9
\endlabellist 
\centering 
\includegraphics[width=\textwidth]{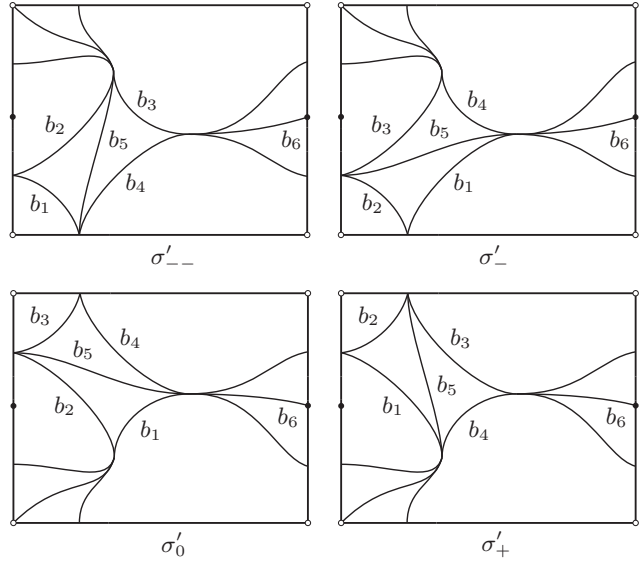} 
\caption{Folded Schubert train track $\sigma'$. \label{fig:bridgearcs}}
\end{figure} 
   
\begin{lem}\label{lem:bridgearcs}
If trivial arc $t_\beta$ of a $(1,1)$ knot with Schubert form $S(r,s,t,\rho)$ is carried by $\sigma'$, where $\sigma'$ is the train track with terminals $\sigma'_{--}$ if $\rho<-2s-r-1$, $\sigma_-'$ if $-2r-s-1\leq\rho<-r$, $\sigma_0'$ if $-r\leq\rho<t$, and $\sigma_+'$ if $\rho\geq t$, as depicted in Figure \ref{fig:bridgearcs}. 
The counting measure $w'=w'_{t_\beta}$ of $t_\beta$ on $\sigma'$ is 
$(w'_1,\dots,w'_6)=$
$$
\begin{cases}
(r+s,r,3r+s+t+1,r,-2r-s-\rho-1,1) & \text{if $\rho<-2r-s-1$,}\\
(r,-r-\rho-1,r,r+t-\rho,2r+s+\rho+1,1) & \text{if $-2r-s-1\leq\rho<-r$,}\\
(3r+s+\rho+1,r,r+\rho,r,t-\rho,1) & \text{if $-r\leq\rho<t$,}\\
(r,r+t,r,3r+s+t+1,\rho-t,1) & \text{if $\rho\geq t$.}\\
\end{cases}
$$
\end{lem}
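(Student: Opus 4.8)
\section*{Proof proposal}

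The plan is to obtain Lemma~\ref{lem:bridgearcs} from Lemma~\ref{lem:traintrack} by continuing the same train-track calculus, now folding the Schubert train track $\sigma$ down to its simplified form $\sigma'$. Lemma~\ref{lem:traintrack} already gives $t_\beta\prec\sigma$ with counting measure $(w_1,\dots,w_5)=(r,s,t,|\rho|,1)$ in each of the three regimes $\sigma_{--}$ ($\rho<-r$), $\sigma_-$ ($-r\le\rho<0$), and $\sigma_+$ ($\rho\ge 0$). Because a sequence of slides, foldings, and splittings relates \emph{measured} train tracks and hence preserves the carried isotopy class, it suffices to exhibit, for each regime, an explicit such sequence carrying $(\sigma,w)$ to $(\sigma',w')$ and then to read off the terminal six-tuple. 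Throughout I would propagate weights by the two rules of Section~\ref{sec:prelims}: a slide sets $w_1'=w_2+w_3$, and a left or right splitting of a large branch carrying weights $w_2,w_3$ produces a small branch of weight $|w_2-w_3|$, the direction being forced by the sign of $w_2-w_3$.

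The geometric content of the folding is that, although $\sigma$ has over a dozen branches, a component of $t_\beta\setminus(\mu\cup\lambda)$ lies in one of only five parallel classes in the fundamental polygon $P$; the folds and slides are exactly the moves that amalgamate the branches of a common parallel class, so that after discarding the branches on which $t_\beta$ has zero weight (that is, after passing to the support of $t_\beta$) one is left with the six branches $b_1,\dots,b_6$ of $\sigma'$. For each of the three starting tracks I would first perform the slides that bring the branches of a common class into position, then split or fold as needed, substituting $|\rho|=\rho$ or $|\rho|=-\rho$ as dictated by the regime and simplifying the resulting entries.

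The four-way case structure comes entirely from the direction of the final splitting at the branch producing $b_5$. One checks that this branch carries weight $w_5'=|\rho-c|$, where the relevant center is $c=t$ when $\rho\ge -r$ and $c=-2r-s-1$ when $\rho<-r$; whether it is split to the left or the right — and hence which of the paired tracks $\sigma'$ results — is governed by the sign of $\rho-c$. This yields the two genuinely new thresholds $\rho=t$ and $\rho=-2r-s-1$, while $\rho=-r$ is simply inherited from the $\sigma_-/\sigma_{--}$ dichotomy of Lemma~\ref{lem:traintrack}, and the old nonsmoothness at $\rho=0$ is absorbed: the entire regime $\sigma_-$ together with the portion $0\le\rho<t$ of $\sigma_+$ folds onto the single track $\sigma'_0$ under one uniform polynomial weight formula carrying no $|\rho|$. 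I would finish by confirming that the four resulting six-tuples agree termwise with the statement and that each is nonnegative on its declared range (for instance $w_5'$ reduces to $-2r-s-\rho-1$, $2r+s+\rho+1$, $t-\rho$, or $\rho-t$, each manifestly nonnegative exactly on its interval).

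The main obstacle I anticipate is not conceptual but combinatorial: keeping an honest accounting of which branch is which as the dozen branches of $\sigma$ are slid, folded, split, and dropped from the support, so that the relabeling to $b_1,\dots,b_6$ is correct and the four measure vectors come out exactly as written. A secondary subtlety is verifying that $\sigma_-$ and the lower part of $\sigma_+$ genuinely fold to the \emph{same} track $\sigma'_0$, together with the careful treatment of the integer boundary values $\rho=-r,\,t,\,-2r-s-1$ and of degenerate cases (such as $t=0$) at which a split weight vanishes and a branch passes out of the support. As in the proof of Lemma~\ref{lem:traintrack}, this bookkeeping is best carried out alongside the explicit pictures of $\sigma$ and $\sigma'$ in Figures~\ref{fig:traintrack} and~\ref{fig:bridgearcs}.
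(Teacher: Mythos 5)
Your proposal is correct and follows essentially the same route as the paper: starting from the measured track $(\sigma,w)$ of Lemma~\ref{lem:traintrack}, applying slides and folds to amalgamate the parallel classes, and then performing a final splitting whose direction (governed by the sign of $\rho-t$ for $\rho\geq -r$, or of $\rho+2r+s+1$ for $\rho<-r$) selects among the four tracks $\sigma'$ and produces the stated weight vectors. The paper likewise carries out only the $\sigma_+$ case explicitly (collapsing the complementary regions of Figure~\ref{fig:folding} before the final split) and omits the remaining cases as similar, so your accounting of the case structure and of the merging of $\sigma_-$ with the lower part of $\sigma_+$ into $\sigma'_0$ matches its argument.
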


\begin{proof}
We show that the measured train tracks $(\sigma,w)$ and $(\sigma',w')$ are equivalent.
If $\rho\geq t$, then $t_\beta\prec\sigma_+$, according to Lemma \ref{lem:traintrack}. 
Apply a sequence of slide and fold moves so as to collapse the complementary regions labeled 1 -- 6,  in that order, that appear in Figure \ref{fig:folding}. 
The resulting traintrack with induced branch weights appears in Figure \ref{fig:folding}, to the right.
Split the large branch adjacent to the two branches with weights $r+t$ and $r+\rho$. 
If $\rho\geq t$, then a left splitting of the branch results in $\sigma'_+$,
otherwise $\rho<t$ and a right splitting yields $\sigma'_0$.
The induced measure $w'$ on $\sigma'$ is easily obtained.
The other cases follow similarly, and they are omitted.
\end{proof}
 
\begin{figure}[ht!] 
\labellist 
\footnotesize\hair 2pt 
\pinlabel $1$ at 144 41
\pinlabel $2$ at 14 23
\pinlabel $3$ at 108 17
\pinlabel $4$ at 228 17
\pinlabel $5$ at 231 104
\pinlabel $6$ at 304 82
\tiny
\pinlabel $r$ at 377 61
\pinlabel $r+t$ at 382 96
\pinlabel $r+\rho$ at 400 110
\pinlabel $r$ at 442 97
\pinlabel $3r+s+\rho+1$ at 445 31
\pinlabel $3r+s+t+1$ at 450 55
\endlabellist 
\centering 
\includegraphics[width=\textwidth]{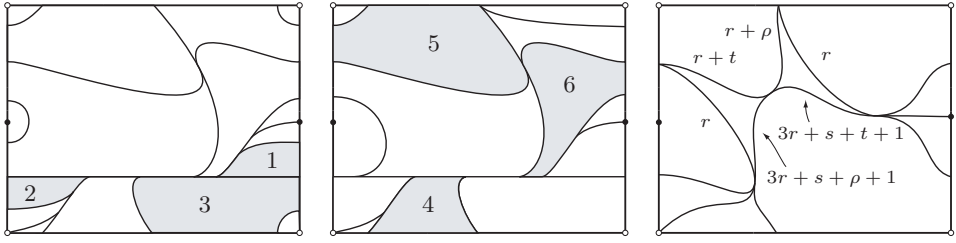} 
\caption{Folding $\sigma_+$. \label{fig:folding}}
\end{figure}

\section{Bridge sequence}

The meridians $\mu,\lambda$ divide $t_\beta$ into subarcs, which we denote by
$d_1,\dots, d_n$, each of which is properly embedded in the fundamental polygon.
Index the subarcs according to their order in $t_\beta$ from $y$ to $x$.
If $t_\beta\prec\sigma'$, then Figure \ref{fig:bridgearcs} shows the number $n$ of subarcs is $\displaystyle \Sigma_{i=1}^5 w'_i,$ where $w'$ is the counting measure of $t_\beta$ on $\sigma'$.
Parameterize $P$ by the half-open interval $[0,2n+4)\subset\mathbb{R}$, such that $p\in P$ is integral if and only if $p$ is an endpoint of $d_k$, for some $i\in\{1,\dots,n\}$ or $p$ is identified with one of the basepoints $x,y$ in the torus $\Sigma$.
Let $s_{2i-1},s_{2i}$ denote the endpoints of $d_i$, $i=1,\dots,n$. 
The ``bridge'' $t_\beta=d_1\cup\dots\cup d_n$ is uniquely determined (up to isotopy) by the sequence of endpoints $s_1,s_3,\dots,s_{2n-1}$.

\begin{defn}
The \emph{bridge sequence} of a $(1,1)$ knot in Schubert form is the sequence of integers $(s_1,s_3,\dots,s_{2i-1},\dots,s_{2n-1})$.
\end{defn}

Let $y_1,y_2,y_3,y_4\in P$ denote the points homologous to $y$, and $x_1,x_2\in P$ the points homologous to $x$, the order chosen to agree with the parameterization of $P$.
Figure \ref{fig:bridgearcs} and Lemma \ref{lem:bridgearcs} determine the integer representatives of $y_1,y_2,x_1,y_3,y_4,x_2$, and they appear in Table \ref{tbl:basepoints}. 
\begin{table}[htdp]
\caption{}\label{tbl:basepoints}
\begin{center}
\begin{tabular}{c|l|l}
& $\rho<-r$ & $\rho\geq -r$ \\
\hline 
$y_1$ & 0 & 0\\
$y_2$ & $-\rho$ & $2r+\rho+1$\\
$x_1$ & $2r+s-\rho+1$ & $4r+s+\rho+2$\\
$y_3$ & $4r+s+t-\rho+2$ & $6r+s+t+\rho+3$\\
$y_4$ & $4r+s+t-2\rho+2$ & $8r+s+t+2\rho+4$\\
$x_2$ & $6r+s+2t-2\rho+3$ & $10r+s+2t+2\rho+5$
\end{tabular}
\end{center}
\end{table}
The initial and final points of $t_\beta$ follow similarly: 
\begin{eqnarray}
s_1&=&\left\{
			\begin{array}{ll}
				y_3 & \mbox{if }\rho<-r,\\
				y_2 & \mbox{if }\rho\geq -r,
			\end{array}\right.\\
s_{2n}&=&x_2.
\end{eqnarray}
The branches $b_1,\dots,b_4$ of $\sigma'$ are ordered so as to facilitate the statement of the following Proposition \ref{prop:bridge}, which computes the bridge sequence explicitely in terms of Schubert form.
Define the \emph{center} $p_i$ of branch $b_i$, $i=1,\dots,4$ to be as in Table \ref{tbl:bridgearcs}; that is,
$p_i$ is a point in $\{y_1,y_2,x_1,y_3,y_4,x_2\}-\{s_1,s_{2n}\}$ that is equidistant in $P$ from the ends of each subarc carried by $b_i$.
\begin{table}[htdp]
\caption{}
\begin{center}
\begin{tabular}{c|c|c|c|c}
& $\rho<-2r-s-1$ & $-2r-s-1\leq\rho<-r$ & $-r\leq\rho<t$ & $t\leq\rho$ \\
\hline
$p_1$ & $y_2$ & $y_1$ & $y_1$ & $x_1$ \\
$p_2$ & $x_1$ & $y_2$ & $x_1$ & $y_3$ \\
$p_3$ & $y_4$ & $x_1$ & $y_3$ & $y_4$ \\
$p_4$ & $y_1$ & $y_4$ & $y_4$ & $y_1$ \\  
\end{tabular}\label{tbl:bridgearcs}
\end{center}
\end{table}

\begin{prop}\label{prop:bridge}
Let $S=S(r,s,t,\rho)$ be the Schubert form of a $(1,1)$ knot. 
If $w'$ is the counting measure of $t_\beta$ on $\sigma'$, then the terms of the bridge sequence of $S$ are
$$
s_{2i-1}=(\psi\phi)^{i-1}(s_1),\quad i=1,\dots,n
$$
where $\psi,\phi$ are permutations given by the cycle notation below:
$$
\phi = 
\prod_{i=1}^4\left(\prod_{j=1}^{w'_i}(\,\overline{p_i-j}\;\;\;\overline{p_i+j}\,)\right)\prod_{k=1}^{w'_5}(\,
\overline{p_1-w'_1-k}\;\;\;\overline{p_2+w'_2+k}\,), 
$$
$$\psi= \prod_{i=1}^{y_2-y_1-1}(\,y_1+i\;\;\;y_4-i\,)\prod_{j=1}^{y_3-y_2-1}(\,y_4+j\;\;\;y_3-j\,),
$$
and $\overline{a}=a \mod{2n+4}$.
\end{prop}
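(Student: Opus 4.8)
The plan is to recognize the bridge sequence as the orbit of $s_1$ under a single first-return map on the integer points of $\partial P$, and to identify that map as $\psi\phi$, where $\phi$ records how each subarc traverses the interior of the fundamental polygon $P$ and $\psi$ records how $\partial P$ is reglued to form the torus $\Sigma$. Reading $t_\beta$ from $y$ to $x$, the arc alternately crosses the interior of $P$ along a subarc $d_i$, from its first endpoint $s_{2i-1}$ to its second endpoint $s_{2i}$, and then crosses $\mu\cup\lambda$, passing from the edge point $s_{2i}$ to the identified edge point $s_{2i+1}=s_{2(i+1)-1}$. Thus it suffices to prove the two identities $\phi(s_{2i-1})=s_{2i}$ and $\psi(s_{2i})=s_{2i+1}$ for every relevant $i$; the formula $s_{2i-1}=(\psi\phi)^{i-1}(s_1)$ then follows by induction on $i$, with the base case $i=1$ immediate and the inductive step $s_{2(i+1)-1}=\psi(\phi(s_{2i-1}))$.

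First I would establish the gluing involution $\psi$. By Table \ref{tbl:basepoints} the six special points $y_1,y_2,x_1,y_3,y_4,x_2$ occur in this cyclic order along $\partial P$ and cut it into the four edges of $P$; the boundary word $\lambda\mu\lambda^{-1}\mu^{-1}$ identifies the $\lambda$-edge from $y_1$ to $y_2$ with the $\lambda^{-1}$-edge from $y_3$ to $y_4$, and the $\mu$-edge from $y_2$ to $y_3$ with the $\mu^{-1}$-edge running from $y_4$ through $x_2$ back to $y_1$. A direct computation from Table \ref{tbl:basepoints} shows that opposite edges have equal integer length, so the two products defining $\psi$ are exactly the two orientation-reversing edge identifications: the first pairs $y_1+i$ with $y_4-i$, the second pairs $y_3-j$ with $y_4+j$, together matching $x_1\leftrightarrow x_2$ and fixing the four corners. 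Since $s_{2i}$ and $s_{2i+1}$ are by construction the two $\partial P$-representatives of the single point where $t_\beta$ meets $\mu\cup\lambda$ after $d_i$, this yields $\psi(s_{2i})=s_{2i+1}$.

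Next I would establish the arc involution $\phi$. By Lemma \ref{lem:bridgearcs} and Figure \ref{fig:bridgearcs}, the $n=\sum_{i=1}^{5}w'_i$ subarcs fall into five parallel families: for $i=1,\dots,4$ the branch $b_i$ carries a bundle of $w'_i$ nested arcs turning around the basepoint $p_i$ of Table \ref{tbl:bridgearcs}, so the $j$-th such arc joins the points $j$ steps before and after $p_i$, namely $\overline{p_i-j}$ and $\overline{p_i+j}$; the branch $b_5$ carries $w'_5$ arcs running between the bundles at $p_1$ and $p_2$, joining $\overline{p_1-w'_1-k}$ to $\overline{p_2+w'_2+k}$. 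Reading these pairings off the figure and checking that the resulting transpositions are disjoint and exhaust the $2n$ subarc endpoints, with the four centers $p_i$ as the only fixed points, shows that $\phi$ is precisely the involution swapping the two ends of each subarc, so that $\phi(s_{2i-1})=s_{2i}$. Combined with the previous paragraph and the given value of $s_1$ (equal to $y_3$ or $y_2$ according to the sign of $\rho+r$), the induction closes.

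The main obstacle will be the bookkeeping in this last step: one must verify, uniformly across the four ranges of $\rho$ governed by Table \ref{tbl:bridgearcs}, that the measures $w'$ of Lemma \ref{lem:bridgearcs} place the endpoints so that the bundles nest and abut exactly as Figure \ref{fig:bridgearcs} dictates, with the $b_5$-family threading between the $b_1$- and $b_2$-bundles without collision. Once the positions of the special points in Table \ref{tbl:basepoints} and the initial point $s_1$ are in hand, this reduces to a finite arithmetic verification, but it is the delicate part of the argument; by contrast, the gluing map $\psi$ and the concluding induction are routine.
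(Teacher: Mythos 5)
Your proposal is correct and follows essentially the same route as the paper: both decompose the step map into the involution $\phi$ swapping the two endpoints of each subarc (read off branch-by-branch from $\sigma'$, with the nested bundles around $p_1,\dots,p_4$ and the $b_5$-arcs threading between the $p_1$- and $p_2$-bundles) and the gluing involution $\psi$ given by the edge identifications of $P$, then iterate $\psi\phi$ from $s_1$. The paper's proof is no more detailed on the ``finite arithmetic verification'' you flag than your sketch is; it likewise reads the pairings off Figure \ref{fig:bridgearcs} and Tables \ref{tbl:basepoints}--\ref{tbl:bridgearcs}.
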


Note that the initial term $s_1$ from which the proposition computes the bridge sequence is given by (1) and Table \ref{tbl:basepoints}, hence, the bridge sequence is completely specified by the proposition.
  
\begin{proof}
The $i$th term $s_{2i-1}$ of the bridge sequence corresponds to the initial endpoint of the subarc $d_i\subset t_\beta$.
For $i<n$, the next term, $s_{2i+1}$, in the bridge sequence is the (unique) point in $P$ that is homologous (in $\Sigma$) to the final endpoint $s_{2i}$ of $d_i$. 
We will show that $\phi$ transposes the endpoints of $d_i$, and $\psi$ transposes homologous (integer) pairs in $P$, hence 
$$
s_{2i+1}=\psi\phi(s_{2i-1}),
$$
and the bridge sequence is obtained from $s_1$ by iterating $\psi\phi$ as claimed.

Consider $\phi$. 
There are five parallel classes of arcs among $d_1,\dots,d_n$, and they correspond to the branches $b_1,\dots,b_5$ of the train track $\sigma'$. 
For $i=1,\dots,4$, the branch $b_i$ is centered on $p_i$, and the endpoints of an arc carried by $b_i$ lie at a distance $j=1,\dots,w'_i$ to either side of $p_i$.
Given that $P=[0,2n+4)$, the two points at distance $j$ from $p_i$ are $\overline{p_i\pm j}$, where $\overline{a}=a \mod{2n+4}$.
Hence, the $2$-cycle
$(\,\overline{p_i-j}\;\;\;\overline{p_i+j}\,)$ transposes endpoints of arcs carried by the branch $b_i$, $i=1,\dots,4$.
As for $b_5$, one end is adjacent to the end of $b_1$ preceeding $p_1$ (in $P$), and the other is adjacent the end of $b_2$ succeeding $p_2$.
Therefore, one endpoint of an arc carried by $b_5$ is less than $p_1$ by $w'_1-k$, and the other is greater than $p_2$ by $w'_2+k$ for some  $k=1,\dots,w'_5$; 
the permutation $(\overline{p_1-w'_1-k}\;\;\;\overline{p_2+w'_2+k})$ transposes the two.
Multiplying all the above transpositions results in the permutation $\phi$, as required.

The ``gluing'' permutation $\psi$ is also a product of disjoint transpositions.
The two sequences of integers $y_1+1,y_1+2,\dots,y_2-1$ and $y_3+1,y_3+2,\dots,y_4-1$ are identified in $\lambda\subset\Sigma$, in reverse order, and the homologous pairs are transposed by cycles
$(\,y_1+i\;\;\;y_4-i\,)$, for $i=1,\dots,y_2-y_1-1$.
Similarly, the integer sequences $y_2+1,y_2+2,\dots,y_3-1$ and $y_4+1,y_4+2,\dots,\overline{y_1-1}$ are identified in $\mu\subset\Sigma$, in reverse order.
The corresponding transpositions are $(\,y_2+j\;\;\;\overline{y_1-j}\,)$, for $j=1,\dots,y_3-y_2-1$.
(There is no single point identified with one of $y_1,\dots,y_4$; they have been excluded from the permutation $\psi$ because they are already all identified with $s_1$.)
The product of the transpositions associated with $\lambda$ and $\mu$ is $\psi$, as claimed.
\end{proof}

\begin{exmp}[Trefoil bridge sequence]\label{ex:trefoilbridge}
A Schubert form for the trefoil is $(r,s,t,\rho)=(0,0,2,2)$. 
From Lemma \ref{lem:bridgearcs}, we have $(w'_1,\dots,w'_5)=(0,2,0,3,0)$, with ``centers'' given by Table \ref{tbl:bridgearcs}: $(p_1,p_2,p_3,p_4)=(x_1,y_3,y_4,y_1)$.
From Table \ref{tbl:basepoints}, basepoints $y_1,y_2,x_1,y_3,y_4,x_2$ take positions $0,3,4,7,10,13$, respectively, hence $(p_1,p_2,p_3,p_4)= (4,7,10,0)$.
The gluing permutation is
\begin{eqnarray*}
\psi &=& \prod_{i=1}^2(\,0+i\;\;\;10-i\,)\prod_{j=1}^3(\,10+j\;\;\;7-j\,)\\
&=& (1\;\;9)(2\;\;8)(11\;\;6)(12\;\;5)(13\;\;4),
\end{eqnarray*}
where $\overline{a}=a\mod{2n+4}=a\mod{14}$, since $n=\Sigma_{i=1}^5 w'_i =5$.
There are only two branches of positive weight, $b_2$ and $b_4$, and the subarc permutation is
\begin{eqnarray*}
\phi &=& 
\prod_{j=1}^{2}(\,\overline{7-j}\;\;\;\overline{7+j}\,)
\prod_{j=1}^{3}(\,\overline{0-j}\;\;\;\overline{0+j}\,)
\\
&=& (6\;\;8)(5\;\;9)(13\;\;1)(12\;\;2)(11\;\;3).
\end{eqnarray*}

Since $\rho\geq -r$, the initial term $s_1$ is $y_2=2r+\rho+1=3$. 
Going to the end of the first subarc, we meet $\phi(3)=11$, which is homologous to $s_3=\psi(11)=6$.
Thus, the bridge sequence begins $3,6$. 
Continuing in this way, alternating $\phi$ and $\psi$ a total of $n=5$ times, we obtain the bridge sequence $(3,6,2,5,1)$.
See Figure \ref{fig:trefoilsequence}.
\end{exmp}

\section{Attaching sequence}

Let $t_\beta$ be the bridge of a $(1,1)$ knot in Schubert form $S$, whose bridge sequence is $(s_1,s_3,\dots,s_{2n-1})$.
By $t_\beta^i$, we denote the $i$th partial bridge $d_1\cup\dots\cup d_i$, for $i=1,\dots,n$. 
Define $t_\beta^0:=s_1$. 
The same parameterization of $P$ developed in the previous section to characterize $t_\beta$ also serves to characterize an oriented attaching circle $\gamma\subset\Sigma-t_\beta^i$.

\begin{defn}
Let $i$ be an integer such that $0\leq i\leq n$.
An \emph{attaching sequence for $t_\beta^i$} is a sequence of half-integer points $(c_1,\dots,c_m)$ in $P$ for which there is an oriented simple closed curve $\gamma\subset\Sigma-t_\beta^i$ with the property that $\gamma\setminus P$ consists of $m$ arcs, and the tail endpoint $c_j'$ of the $j$th arc satisfies the inequality $c_j-\frac{1}{2}<c_j'<c_j+\frac{1}{2}$ for all $j=1,\dots,m$.
\end{defn}

An attaching sequence $(c_1,\dots,c_m)$ determines the curve $\gamma$ uniquely (up to isotopy), and we will therefore use the attaching sequence to represent the curve.
Extend the gluing map $\psi$ defined in Proposition \ref{prop:bridge} to the half integers:
$$\psi= \prod_{i=1}^{2y_2-2y_1-1}\left(\,y_1+\frac{i}{2}\;\;\;y_4-\frac{i}{2}\,\right)\prod_{j=1}^{2y_3-2y_2-1}\left(\,y_4+\frac{j}{2}\;\;\;y_3-\frac{j}{2}\,\right).
$$
Finally, we use
interval notation $(a,b)$ to denote the clockwise-oriented segment in $P$, from $a$ to $b$;
$$(a,b)=\left\{\begin{array}{ll}
\{x\in P :a<x<b\} & \mbox{ if } a\leq b,\\ 
\{x\in P :b<x \mbox{ or } x<a\} & \mbox{ if } b<a.
\end{array}\right.
$$

\begin{prop}\label{prop:detour}
Let $(s_1,s_3,\dots,s_{2n-1})$ be the bridge sequence of a $(1,1)$ knot in Schubert form $S$.
Suppose $\gamma=(c_1,c_2,\dots, c_m)$ is an attaching sequence for $t_\beta^i$. 
Let $a=s_{2i-1}$ and $b=\phi(s_{2i-1})$.
Define
$$
f_{S,i}(\gamma)=(f_{S,i}(c_1), \dots, f_{S,i}(c_j), \dots, f_{S,i}(c_m)),
$$
where
\begin{displaymath}
f_{S,i}(c_j)=\left\{
	\begin{array}{ll}
		c_j, \psi(b-\frac{1}{2}), b+\frac{1}{2}, & \mbox{if }c_j\in(a,b) \mbox{ and } \psi(c_{\overline{j+1}})\in(b,a),\\
		c_j, \psi(b+\frac{1}{2}), b-\frac{1}{2}, & \mbox{if }c_j\in(b,a) \mbox{ and } \psi(c_{\overline{j+1}})\in(a,b),\\
		c_j, & \mbox{else,}\\
	\end{array}\right.
\end{displaymath}
for $\overline{j}=j\mod{m}$, and $i=1,\dots,n$. Then $f_{S,i}(\gamma)$ is an attaching sequence for $t_\beta^{i+1}$, and the curve represented by $f_{S,i}(\gamma)$ is isotopic to $\gamma$.

\end{prop}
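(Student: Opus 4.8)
The plan is to verify the proposition by tracking how the attaching curve $\gamma$ must be modified when we pass from the partial bridge $t_\beta^i$ to $t_\beta^{i+1}$, i.e.\ when we adjoin the single subarc $d_{i+1}$. The key geometric observation is that $d_{i+1}$ runs from $s_{2i-1}$ to its other endpoint; with $a=s_{2i-1}$ and $b=\phi(s_{2i-1})$, the map $\phi$ sends the initial endpoint of $d_{i+1}$ to its final endpoint in $P$, so the newly added arc is carried by exactly one branch of $\sigma'$ and joins the position $a$ to the position $b$ across the fundamental polygon $P$. Adjoining $d_{i+1}$ obstructs any strand of $\gamma$ that previously passed between $a$ and $b$: such a strand must now be rerouted so as to avoid crossing the new arc. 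The proof therefore reduces to checking that $f_{S,i}$ encodes precisely this rerouting, and that the rerouted curve is isotopic to the original $\gamma$ in $\Sigma-t_\beta^{i+1}$ (indeed isotopic to $\gamma$ as a curve, since $t_\beta^{i+1}$ differs from $t_\beta^i$ only by an arc disjoint from $\gamma$ up to isotopy).

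First I would fix the local picture near the added arc $d_{i+1}$. The polygon $P$ is parameterized by $[0,2n+4)$, and the two half-integer intervals $(a,b)$ and $(b,a)$ (in the clockwise-oriented sense defined just before the statement) partition the boundary positions into the two sides of $d_{i+1}$. The relevant data is, for each segment $c_j$ of $\gamma$, whether its tail enters the region $(a,b)$ and its (glued) successor leaves on the opposite side $(b,a)$, or vice versa: this is exactly the pair of conditions appearing in the two nontrivial branches of the definition of $f_{S,i}(c_j)$. In such a case the segment $c_j$ would have to cross $d_{i+1}$, so I would insert a detour: the strand is pushed around the relevant endpoint of $d_{i+1}$, which in terms of half-integer positions means appending the two positions $\psi(b\mp\tfrac12)$ and $b\pm\tfrac12$ immediately after $c_j$. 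The use of the extended gluing map $\psi$ (now defined on half-integers) records that the detour runs along $\mu$ or $\lambda$ and re-enters $P$ at the identified position on the far side. Segments that do not separate the two sides of $d_{i+1}$ are left unchanged, which is the ``else'' case.

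The two steps I would then carry out in detail are, first, to confirm that $f_{S,i}(\gamma)$ is a valid attaching sequence for $t_\beta^{i+1}$ — that is, that the listed half-integer points are realized by an oriented simple closed curve disjoint from $t_\beta^{i+1}$, with each segment's tail endpoint $c_j'$ satisfying $c_j-\tfrac12<c_j'<c_j+\tfrac12$ — and second, to exhibit the isotopy in $\Sigma$ from the curve represented by $f_{S,i}(\gamma)$ back to $\gamma$. The first is a bookkeeping check that the inserted positions $\psi(b\mp\tfrac12),\,b\pm\tfrac12$ are consistent with the gluings of $P$ and that disjointness from $d_{i+1}$ is preserved; the orientation conventions in the definition of $(a,b)$ guarantee that the detour is inserted on the correct side. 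The second follows because the detour is a finger-move across the newly adjoined arc: the modification slides a portion of $\gamma$ over $d_{i+1}$, an isotopy supported in a neighborhood of $d_{i+1}$ that does not meet $t_\beta^{i+1}$ elsewhere.

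The main obstacle I expect is the second step together with simple-closedness: I must ensure that the inserted detours do not create self-intersections of the resulting curve and that, globally, the collection of modifications assembles into a single embedded oriented circle rather than several components or an arc with a crossing. This requires a careful case analysis of how consecutive segments $c_j, c_{\overline{j+1}}$ interact near the two endpoints of $d_{i+1}$, using the fact that $\gamma$ was already simple and disjoint from $t_\beta^i$, and that $d_{i+1}$ is a single unknotted detour arc carried by one branch of $\sigma'$. Handling the wraparound indexing $\overline{j}=j\bmod m$ (so that the last segment's successor is the first) correctly, and verifying that the half-integer positions never collide with the integer endpoints of $t_\beta^{i+1}$, is the delicate part of the argument.
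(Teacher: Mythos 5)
Your outline follows the same route as the paper: both treat the passage from $t_\beta^i$ to $t_\beta^{i+1}$ as a finger move that pushes every strand of $\gamma$ crossing the new subarc $d_{i+1}$ around its endpoint $b$, and both read off the inserted terms $\psi(b\mp\frac{1}{2}),\,b\pm\frac{1}{2}$ from that detour. But the point you defer to a ``careful case analysis'' --- that the detours assemble into a single embedded curve --- is exactly where the paper does its real work, and a case analysis of consecutive pairs $c_j,c_{\overline{j+1}}$ near ``the two endpoints of $d_{i+1}$'' is not how it is resolved (note that the detour only ever goes around the single endpoint $b$; nothing is ever inserted near $a$). The paper first cuts $P$ along $t_\beta^i$ and along $\gamma$ minus its crossing arcs $g_1,\dots,g_p$, and realizes $d_{i+1}$ as a push-off $u$ of a boundary path of the complementary region $R$ containing $a$ and $b$. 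This choice puts $u$ in minimal position with respect to $\gamma$ from the outset: $u$ meets each $g_k$ in exactly one point and meets no other strand, a fact you implicitly assume when you assert that a strand with endpoints on opposite sides ``would have to cross $d_{i+1}$'' (once, and that strands on one side cross it not at all). Embeddedness of the modified curve is then automatic rather than case-checked: the $g_k$ are indexed by the order in which $u$ meets them from $b$ back toward $a$, and $g_k$ is rerouted around $b$ along the boundary of a tube of radius $\epsilon/k$ about $u$, so the $p$ detours are nested and pairwise disjoint by construction, while each $g_k'$ is visibly isotopic to $g_k$. If you supply these two devices --- the minimal-position realization of $d_{i+1}$ and the nested tubes --- your plan becomes the paper's proof; without them the central claim of the proposition is left unverified.
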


\begin{proof}
We will apply a ``finger move'' along the subarc of $t_\beta$ from $a$ to $b$, and then observe that the result has attaching sequence $f_{S,i}(\gamma)$, as defined.

Let $g_1,\dots,g_p\subset \gamma$ be arcs, each of which intersects $P$ exactly in its two endpoints, one of lies in $(a,b)$ and the other in $(b,a)$.
Cut the fundamental polygon along $t_\beta^i$ and $\gamma-g_1-\dots-g_p$.
Among the regions remaining, there is one, $R$, that contains $a,b$.
Take a simple path in the boundary of $R$ that connects $a$ and $b$, and push it just off the boundary to obtain an arc $u$ isotopic to $d_{i+1}=t_\beta^{i+1}-t_\beta^{i}$. 
The arc $u$ is disjoint from $t_\beta^i$ and intersects $\gamma$ in $p$ points, with one intersection point in each of $g_1,\dots,g_p$. 
Relabeling if necessary, index $g_1,\dots,g_p$ by the order in which $u$ meets them, from $b$ back to $a$.
\begin{figure}[ht!] 
\labellist 
\footnotesize\hair 2pt 
\pinlabel $s_{2i+1}$ at 97 43
\pinlabel $s_{2i+2}$ at 430 111
\pinlabel $U_k$ at 162 65
\pinlabel $g'_k$ at 243 119
\pinlabel $u$ at 295 85
\endlabellist 
\centering 
\includegraphics[width=\textwidth]{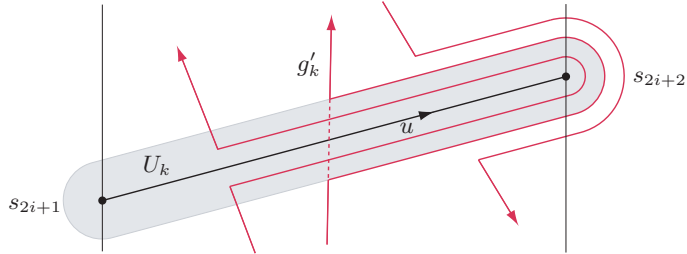} 
\caption{Finger move.\label{fig:finger}}
\end{figure}

For each $k=1,\dots,p$, take an $\frac{\epsilon}{k}$-radius tube $U_k$ around $u$, where $0<\epsilon\ll 1$. 
The intersection $g_k\cap U_k$ divides $U_k$ into two simply connected regions, of which one, call it $D_k$, is disjoint from $g_{k+1},\dots,g_p$ and contains $b$.
The boundary of $D_k$ consists of $g_k\cap U_k$ joined with the path $\partial U_k\cap D_k$.
Let $g_k'$ be the path obtained by replacing the subarc $(g_k\cap U_k)$ of $g_k$ by $\partial U_k\cap D_k$. 
The curve $\gamma'$ obtained by replacing $g_k$ by $g_k'$ in $\gamma$ for all $k=1,\dots,p$, is isotopic to $\gamma$ because $g_k$ and $g_k'$ are isotopic.
By construction, $\gamma'$ is disjoint from the partial bridge $t_\beta^{i+1}=t_\beta^{i}\cup u$.
See Figure \ref{fig:finger}.

An arc $g_k$ that crosses $u$ from left to right corresponds to $c_j\in(a,b)$ for which $\psi(c_{\overline{j+1}})\in(b,a)$.
In this case, $g_k'$ passes around $u$ (at a distance of $\epsilon/k$) from left to right; $g_k'$ is comprised of three arcs whose endpoints are
$
c_j,w-\frac{1}{2}
$;\quad 
$\psi(b-\frac{1}{2}),\psi(b+\frac{1}{2});
$\quad
and
$
b+\frac{1}{2},\psi(c_{\overline{j+1}}).
$
Replacing $g_k$ by $g_k'$, takes $c_j$ to the three initial points of the above arcs, as required by the first case in the definition of $f_{S,i}$. 
If $g_k$ crosses $u$ from right to left, then the second case results. 
Arcs of $\gamma$ that do not cross $u$ are unchanged by $f_{S,i}$ and this is the content of the third and final case.
Having demonstrated that in all cases the term $c_j$ of $\gamma$ is replaced by $f_{S,i}(c_j)$ in $\gamma$, the proof is complete.  
\end{proof}

\begin{theorem}\label{thm:construction}
Let $K$ be a $(1,1)$ knot with $(1,1)$ decomposition $(H_\alpha,t_\alpha)\cup_h(H_\beta,t_\beta)$ and Schubert form $S=S(r,s,t,\rho)$.
Let $\Sigma=\partial H_\alpha$ and $\{x,y\}=\partial t_\alpha=\partial t_\beta$.
Suppose $s_1,s_3,\dots, s_{2n-1}$ is the bridge sequence of $S$.
Let
\begin{eqnarray*}
\alpha&=&\left\{
	\begin{array}{ll}
		-\rho -\frac{1}{2} & \mbox{if }\rho<-r,\\
		2r+\rho+\frac{1}{2} & \mbox{if }\rho\geq -r.
				\end{array}\right.
\end{eqnarray*}
Define $\beta=\beta_n$, where
\begin{eqnarray*}
\beta_0&=&\left\{
	\begin{array}{ll}
		-\rho +\frac{1}{2} & \mbox{if }\rho<-r,\\
		2r+\rho+\frac{3}{2} & \mbox{if }\rho\geq -r,
				\end{array}\right.\\
\beta_{i+1}&=&f_{S,i}(\beta_i),
\end{eqnarray*}
for $i=0,\dots,n-1$.
Then $(\Sigma,\alpha,\beta,x,y)$ is a genus 1 doubly-pointed Heegaard diagram compatible with $K$.
\end{theorem}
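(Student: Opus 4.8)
The plan is to reduce the assertion that $(\Sigma,\alpha,\beta,x,y)$ is compatible with $K$ to three conditions and verify each. Recall from Section~\ref{sec:prelims} that a doubly-pointed diagram is compatible with $K$ exactly when (i) $\alpha$ and $\beta$ are meridians of $H_\alpha$ and $H_\beta$ meeting transversely in a single point, so that $H_\alpha\cup_h H_\beta=S^3$; (ii) the unique trivial arc in $H_\alpha-D_\alpha$ spanning $x,y$ is isotopic to the given $t_\alpha$; and (iii) the unique trivial arc in $H_\beta-D_\beta$ spanning $x,y$ is isotopic to the given $t_\beta$. The trivial arc on each side is obtained by taking an embedded arc from $x$ to $y$ in the complement of the relevant attaching circle in $\Sigma$ and pushing it into the corresponding handlebody, so conditions (ii) and (iii) amount to exhibiting the Schubert-form arcs as surface arcs in $\Sigma-\alpha$ and in $\Sigma-\beta$. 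Once (i)--(iii) hold, $t_\alpha\cup t_\beta$ is by definition the given knot $K$, and the theorem follows.

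First I would establish the base case: that $(\Sigma,\alpha,\beta_0,x,y)$ is a valid genus~$1$ Heegaard diagram for $S^3$ realizing condition (ii), in which the $\beta_0$-side bridge is the degenerate point $t_\beta^0=s_1$. By the stated formulas, $\alpha$ and $\beta_0$ are the two half-integers immediately below and above the integer representative of the basepoint $y_2$ recorded in Table~\ref{tbl:basepoints} (namely $-\rho$ when $\rho<-r$ and $2r+\rho+1$ when $\rho\geq-r$). Translating these half-integer attaching data into honest curves via the parameterization of $P$, one checks that $\alpha$ is isotopic to the meridian $\mu$, that $\beta_0$ meets $\alpha$ transversely in a single point, and that both avoid $x$ and $y$; this gives condition (i) for the initial diagram. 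Comparing with the standard meridian disk of Figure~\ref{fig:schubert}, the arc from $x$ to $y$ running along $\mu$ in $\Sigma-\alpha$ pushes into $H_\alpha$ to recover precisely the Schubert-form arc $t_\alpha$, establishing (ii); since $\alpha$, $x$, and $y$ are held fixed for the remainder of the construction and the $\alpha$-side trivial arc is independent of $\beta$, condition (ii) then persists to the final diagram. I expect this base-case bookkeeping---reconciling the explicit parameterization of $P$ with the geometry of Figure~\ref{fig:schubert} by hand---to be the main obstacle, since it is the one place where the combinatorial data must be matched against the honest picture of the Schubert decomposition.

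With the base case in hand, the induction is driven entirely by Proposition~\ref{prop:detour}. The inductive invariant is: \emph{$\beta_i$ is an attaching sequence for the partial bridge $t_\beta^i$, and $\beta_i$ is isotopic in $\Sigma$ to $\beta_0$.} It holds for $i=0$, since $t_\beta^0=s_1$ is a single point disjoint from $\beta_0$. Assuming the invariant for $i$, Proposition~\ref{prop:detour} shows that $\beta_{i+1}=f_{S,i}(\beta_i)$ is an attaching sequence for $t_\beta^{i+1}$ and that the curve it represents is isotopic to $\beta_i$; composing isotopies gives $\beta_{i+1}\simeq\beta_0$, so the invariant propagates. The data that $f_{S,i}$ requires---the endpoint $a=s_{2i-1}$ and the opposite endpoint $b=\phi(s_{2i-1})$ of the subarc $d_{i+1}$---are exactly the terms of the bridge sequence supplied by Proposition~\ref{prop:bridge}, with $s_1$ fixed by the theorem's hypotheses. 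Iterating to $i=n$ yields that $\beta=\beta_n$ is an attaching sequence for the full bridge $t_\beta^n=t_\beta$ and is isotopic to $\beta_0$.

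Finally I would assemble the conclusion. Because $\beta\simeq\beta_0$ in $\Sigma$, the pair $(\Sigma,\alpha,\beta)$ is isotopic to the initial diagram and hence remains a genus~$1$ Heegaard diagram for $S^3$ with $|\alpha\cap\beta|=1$, giving condition (i) for the final diagram. Because $\beta=\beta_n$ is an attaching sequence for $t_\beta$, the Schubert arc $t_\beta$ lies in $\Sigma-\beta$; pushing it into $H_\beta$ realizes it as the unique trivial arc of $H_\beta-D_\beta$ spanning $x,y$, which is condition (iii). Together with condition (ii) inherited from the base case, the trivial arcs of $(\Sigma,\alpha,\beta,x,y)$ reproduce the given $t_\alpha$ and $t_\beta$, so $t_\alpha\cup t_\beta$ has knot type $K$ and the diagram is compatible with $K$, as claimed.
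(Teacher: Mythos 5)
Your proposal is correct and follows essentially the same route as the paper: verify that $\alpha$ and $\beta_0$ are the half-integer points flanking $y_2$ and hence meridians of $H_\alpha$ and $H_\beta$ giving a diagram for $S^3$, then iterate Proposition \ref{prop:detour} to conclude that $\beta=\beta_n$ is isotopic to $\beta_0$ and disjoint from $t_\beta=t_\beta^n$, so that the trivial arcs in $H_\alpha-D_\alpha$ and $H_\beta-D_\beta$ are $t_\alpha$ and $t_\beta$. Your explicit three-condition framing and the inductive invariant are just a slightly more detailed organization of the same argument.
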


\begin{proof}
As defined, $\alpha$ and $\beta_0$ correspond to the half-integers $y_2-\frac{1}{2}$ and $y_2+\frac{1}{2}$, which belong to $\lambda$ and $\mu$, respectively. 
As (one-term) attaching sequences, they represent simple closed curves parallel to $\mu$ and $\lambda$; that is, $y_2-\frac{1}{2}$ and $y_2+\frac{1}{2}$ are meridian curves for $H_\alpha$ and $H_\beta$, respectively. 
Thus, $(\Sigma,\alpha,\beta_0)$ is a Heegaard diagram for the $S^3$.
By Proposition \ref{prop:detour}, $f_{S,i}(\beta_i)$ is isotopic to $\beta_i$ for all $i=0,\dots,n$.
Therefore $\beta$ is isotopic to $\beta_0$, and $(\Sigma,\alpha,\beta)$ is also a Heegaard diagram for $S^3$.

According to Schubert form, the trivial arc $t_\alpha$ belongs to the meridian disk bounded by $\mu$.  
Since $\mu$ and $\alpha$ are parallel in $\Sigma$, we can define $D_\alpha\subset H_\alpha$ to be the parallel meridian disk with $\partial D_\alpha=\alpha$, for which $D_\alpha\cap t_\alpha=\emptyset$.
From Proposition \ref{prop:detour} it follows that the curve $\beta_i$ is disjoint from $t_\beta^i$ for all $i=0,\dots,n$. 
Thus $\beta=\beta^n$ is disjoint from $t_\beta=t_\beta^n$, and any properly embedded disk $D_\beta\subset H_\alpha$ with $\partial D_\beta=\beta$ is disjoint from $t_\beta$.
In conclusion, the unique isotopy classes of trivial arcs between $x$ and $y$ in $H_\alpha-D_\alpha$ and $H_\beta-D_\beta$ correspond to $t_\alpha$ and $t_\beta$, respectively, and $(\Sigma,\alpha,\beta,x,y)$ is a doubly-pointed Heegaard diagram compatible with $K=t_\alpha\cup t_\beta$.
\end{proof}

\begin{defn}
An attaching sequence $c_1,\dots,c_m$ for $t_\beta^i$ is \emph{reduced} if for all $j=1,\dots,m$, $(c_j,\psi(c_{\overline{j+1}}))\cap t_\beta\neq\emptyset$ and $(\psi(c_{\overline{j+1}}),c_j)\cap t_\beta\neq\emptyset$, where $\overline{j}=j \mod{m}$.
A pair of terms $c_j,c_{\overline{j+1}}$ that does not meet the above conditions is called an \emph{inessential pair}. 
\end{defn}

\begin{lem}\label{lem:inessentialpairs}
Suppose $\gamma$ is an attaching sequence for
$t_\beta^i$.
If $\gamma'$ is the sequence obtained by removing every inessential pair from $\gamma $, then $\gamma'$ is also an attaching sequence for $t_\beta^i$, and $\gamma $ and $\gamma'$ are isotopic as curves. 
\end{lem}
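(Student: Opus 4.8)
The plan is to read an inessential pair geometrically as an embedded bigon between the curve $\gamma$ and the $1$-skeleton $\mu\cup\lambda$ whose interior misses $t_\beta^i$, to remove it by a bigon isotopy that cancels two crossings of $\gamma$ with $\mu\cup\lambda$, and to iterate until the sequence is reduced. This move is, in spirit, the inverse of the finger move of Proposition \ref{prop:detour}, and the two cancelled crossings account for the two deleted terms.

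First I would set up the bigon. As in Proposition \ref{prop:detour}, the $j$th arc $g_j$ of the curve represented by $\gamma$ runs in the fundamental polygon $P$ from its tail, near $c_j$, to its head, near $\psi(c_{\overline{j+1}})$, the point glued to the tail of $g_{j+1}$ near $c_{j+1}$. The two endpoints of $g_j$ cut $\partial P$ into the boundary arcs $(c_j,\psi(c_{\overline{j+1}}))$ and $(\psi(c_{\overline{j+1}}),c_j)$ of the definition, each of which cobounds an embedded disk in $P$ together with $g_j$. If the pair $c_j,c_{\overline{j+1}}$ is inessential, one of these boundary arcs, say $I$, satisfies $I\cap t_\beta=\emptyset$; let $D$ be the disk it bounds with $g_j$. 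Then $D\cap t_\beta^i=\emptyset$: the arc $g_j\subset\gamma$ misses $t_\beta^i$ and $I$ contains no endpoint of any subarc $d_1,\dots,d_n$, so $\partial D$ misses $t_\beta^i$, and since every $d_k$ is properly embedded in the disk $P$ with both endpoints in $\partial P\setminus I$, no $d_k$ can enter $D$.

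Next I would perform the bigon move and read off its effect. Because $D$ is disjoint from $t_\beta^i$, I can isotope $g_j$ across $D$ and through the segment $\bar I\subset\mu\cup\lambda$ underlying $I$, cancelling the two crossings of $\gamma$ with $\mu\cup\lambda$ at the ends of $g_j$. For this to be supported in $\Sigma-t_\beta^i$ I must also check that the opposite copy of $\bar I$ in $P$, into whose neighborhood the arc is pushed, is free of $t_\beta$; this holds because the two preimages of $\bar I$ in $P$ carry the same crossings of $t_\beta$, the involution $\psi$ pairing the integer crossing points, so emptiness of $I$ forces emptiness of its partner. After the move the arcs $g_{j-1}$ and $g_{j+1}$ join into a single embedded arc of $P$ whose tail is the unchanged tail of $g_{j-1}$, still near $c_{j-1}$. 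Every surviving tail therefore sits at its original half-integer, so the new sequence is precisely $\gamma$ with the terms $c_j$ and $c_{\overline{j+1}}$ deleted; it is an attaching sequence for $t_\beta^i$ and, the move being an isotopy in $\Sigma-t_\beta^i$, it represents a curve isotopic to $\gamma$. The case in which the other boundary arc is the empty one is symmetric. Iterating, each removal lowers the number of terms by two, so after finitely many steps no inessential pair survives, and composing the isotopies gives the statement.

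I expect the main obstacle to be the middle of the previous paragraph: confirming that the bigon move is genuinely supported in $\Sigma-t_\beta^i$ --- in particular that the target neighborhood of the opposite copy of $\bar I$ is clear of $t_\beta$ --- and that it deletes exactly the pair $c_j,c_{\overline{j+1}}$ without disturbing the neighbouring tails. A secondary point is that successive removals can create or destroy other inessential pairs, so ``removing every inessential pair'' must be understood as iterating the single-pair removal until the sequence is reduced.
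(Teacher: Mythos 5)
Your strategy is the same as the paper's: each inessential pair $c_j,c_{\overline{j+1}}$ determines a disk in $\Sigma-t_\beta^i$ bounded by the corresponding arc of $\gamma$ together with the empty boundary interval $I$, and the pair is cancelled by an isotopy pushing that arc across the disk and past $\mu\cup\lambda$. Your verification that the disk and a neighborhood of $\bar I$ (including its opposite preimage in $P$, via the $\psi$-invariance of the crossing set) are clear of $t_\beta^i$ is correct, and is in fact more explicit than what the paper records.

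There is, however, one genuine gap: you check that $D$ is disjoint from $t_\beta^i$ but never that its \emph{interior} is disjoint from the rest of $\gamma$. Because $\gamma$ is embedded and each arc $g_k$ is properly embedded in $P$, any other arc of $\gamma$ meeting $D$ must lie entirely inside $D$ with both endpoints on $I$; such nested arcs do occur (their pairs are then automatically inessential as well, since the subinterval of $I$ they cut off also misses $t_\beta$). In that situation the push of $g_j$ across $D$ and through $\bar I$ is not an isotopy of the embedded curve --- it would have to drag $g_j$ through the nested arcs --- so the step ``perform the bigon move'' can fail as written. The paper avoids this by observing that among all the disks determined by inessential pairs there is an innermost one, whose interior is disjoint from $\gamma$; it cancels that one first and then proceeds to the next innermost. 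Inserting this ordering (or an induction on the number of arcs of $\gamma$ contained in $D$) closes the gap, and the remainder of your argument, including the remark that full reduction requires iterating because new inessential pairs can appear after a cancellation, then goes through.
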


\begin{proof}
Let $c_1,\dots,c_m$ be the terms of the attaching sequence $\gamma$.
Suppose $c_j,c_{\overline{j+1}}$ is an inessential pair, for which, say, $(c_j,\psi(c_{\overline{j+1}}))\cap t_\beta^i=\emptyset$. 
Then $d_j\cup(c_j,\psi(c_{\overline{j+1}}))$ bounds a disk in $\Sigma-t_\beta^i$, where $d_j\subset \gamma$ is the properly embedded subarc with endpoints $c_j,\psi(c_{\overline{j+1}})$. 
Among all such disks there is an innermost disk whose interior is disjoint from $\gamma$. 
Cancel the inessential arc by an isotopy taking it across the disk, thereby removing the points of intersection corresponding to the inessential pair.
Let $\gamma'$ be the result of having applied the same isotopy to the next innermost disk, and so on, until all inessential arcs are removed.
Then $\gamma'$ is represented by attaching sequence $c'$ and $\gamma'$ is isotopic to $\gamma$.
\end{proof}

From now on we assume that an attaching sequence is reduced.
For otherwise we may take any given attaching sequence and successively remove inessential pairs until it is reduced.
By Lemma \ref{lem:inessentialpairs}, the result is an attaching sequence for the same curve.

\begin{exmp}[Trefoil attaching sequence]\label{ex:trefoilsequence}

Schubert form, bridge sequence, and the permutations $\phi$ and $\psi$ were obtained in Example \ref{ex:trefoilbridge} as
\begin{eqnarray*}
S&=&S(0,0,2,2),\\
s_1,s_3,\dots,s_{2n-1}&=&3,6,2,5,1,\\
\phi&=&(6\;\;8)(5\;\;9)(13\;\;1)(12\;\;2)(11\;\;3),\\ 
\psi&=&(1\;\;9)(2\;\;8)(11\;\;6)(12\;\;5)(13\;\;4).
\end{eqnarray*}
The extended gluing permutation is 
\begin{eqnarray*}
\psi &=& (0.5\;\;9.5)(1\;\;9)(1.5\;\;8.5)(2\;\;8)(2.5\;\;7.5)(10.5\;\;6.5)\\
&&\quad(11\;\;6)(11.5\;\;5.5)(12\;\;5)(12.5\;\;4.5)(13\;\;4)(13.5\;\;3.5).
\end{eqnarray*}
Given that $\rho\geq -r$, we have
\begin{eqnarray*}
\alpha=2.5, &&\beta_0=3.5.
\end{eqnarray*}
Therefore $\alpha$ meets $P$ at $2.5$ and $\psi(2.5)=7.5$. 
To determine $\beta_1=f_{S,1}(\beta_0)$, note that the first bridge arc has endpoints $a=s_1=3$ and $b=\phi(s_1)=11$. 
Since $\beta_0$ meets $P$ at $3.5$ and $\psi(3.5)=13.5$, which belong to $(a,b)$ and $(b,a)$, respectively, $f_{S,1}(\beta_0)=f_{S,1}(3.5)=(3.5,\psi(w-.5),w+.5)=(3.5,6.5,11.5)$.
However, the pair $11.5,3.5$ is inessential since the interval $(11.5,\psi(3.5))=(11.5,13.5)$ is disjoint from $t_\beta^1$, which meets $P$ in the points $\{0,3,7,10,11\}$.
The reduced attaching sequence of $t_\beta^1$ is
$$
\beta_1=6.5.
$$
For the next bridge arc, $a=6$ and $b=8$.
Here $f_{S,2}(\beta_1)=f_{S,2}(6.5)=(6.5,2.5,8.5)$, since $6.5\in(a,b)$ and $\psi(6.5)=10.5\in(b,a)$.
This attaching sequence is reduced; 
$$
\beta_2=6.5,2.5,8.5.
$$
For $i=3$, $a=2$ and $b=12$. 
Here $f_{S,3}(6.5)=6.5$, $f_{S,3}(2.5)=(2.5,5.5,12.5)$, and $f_{S,3}(8.5)=8.5$.
The sequence is reduced;
$$
\beta_3=6.5,2.5,5.5,12.5,8.5.
$$
The computations for $i=4,5$ follow similarly, and the results are
\begin{eqnarray*}
\beta_4&=&6.5, 2.5, 5.5, 1.5, 9.5, 12.5, 9.5,\\
\beta_5&=&6.5, 2.5, 5.5, 1.5, 4.5, 13.5, 9.5, 13.5, 9.5.
\end{eqnarray*} 
See Figure \ref{fig:trefoilsequence}.
\end{exmp}

\begin{figure}[ht!] 
\labellist 
\footnotesize\hair 2pt 
\pinlabel $0$ at 132 214
\pinlabel $3$ at 2 214
\pinlabel $7$ at 1 385
\pinlabel $10$ at 136 386
\pinlabel $t_\beta^1$ at 265 276
\pinlabel $t_\beta^2$ at 450 276
\pinlabel $t_\beta^3$ at 68 92
\pinlabel $t_\beta^4$ at 251 92
\pinlabel $t_\beta$ at 438 90
\color{blue}
\pinlabel $\alpha$ at 40 303
\pinlabel $\alpha$ at 403 100
\color{red}
\pinlabel $\beta_0$ at 87 257
\pinlabel $\beta_1$ at 269 345
\pinlabel $\beta_2$ at 448 345
\pinlabel $\beta_3$ at 85 137
\pinlabel $\beta_4$ at 268 135
\pinlabel $\beta$ at 457 139
\endlabellist 
\centering 
\includegraphics[width=\textwidth]{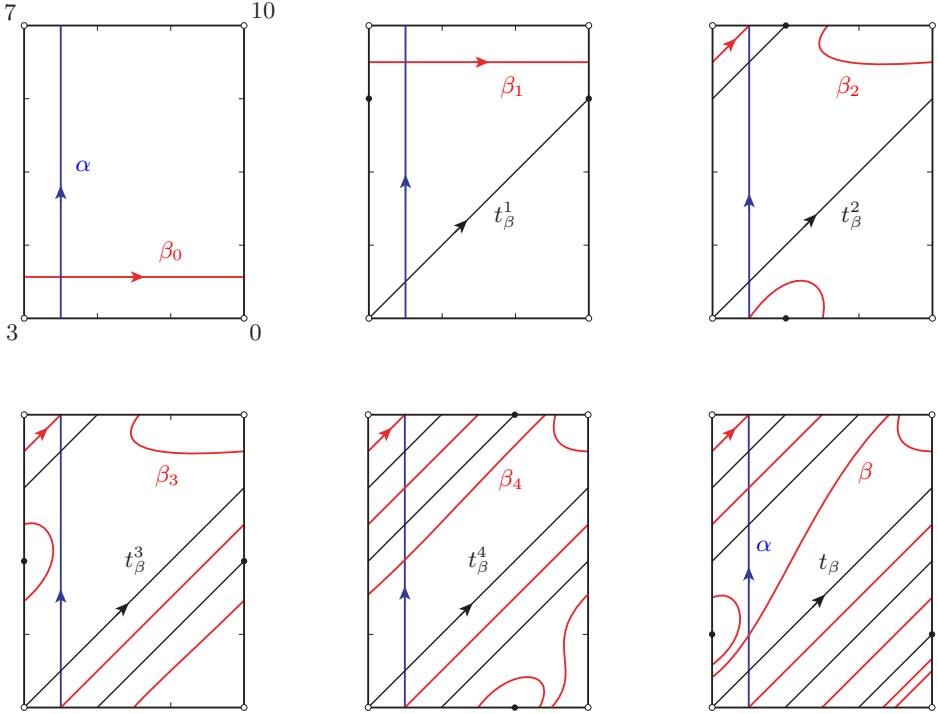} 
\caption{Attaching circle construction for the trefoil.\label{fig:trefoilsequence}}
\end{figure}

\section{Heegaard diagram Schubert form}

\begin{defn}
Let $\sigma\subset\hat{\Sigma}$ be the Schubert train track, and define $\alpha\subset\hat{\Sigma}\setminus\mu$ to be a meridian parallel to $\mu$.
Suppose $w=w_\beta$ is the counting measure of a curve $\beta\prec\sigma$, and $(w_1,\dots,w_5)=(r',s',t',\rho',0)$. 
If $(\Sigma,\alpha,\beta,x,y)$ is a doubly-pointed Heegaard diagram compatible with a knot $K$, then we say $HS(r',s',t',\epsilon\rho')$ is a \emph{Heegaard diagram Schubert form} of $K$, where the sign $\epsilon=\pm 1$ is positive if $\sigma$ equals $\sigma_+$, and negative otherwise.
\end{defn}

We note that Cattabriga and Mulazzani defined an integer 4-tuple parameterization $K(a,b,c,r)$ of (1,1) knots in terms of Heegaard diagrams and Dunwoody manifolds \cite{Cattabriga-Mulazzani}.
Inspection reveals that the Heegaard diagram Schubert form $HS(r',s',t',\rho')$ corresponds to 
$K(r',s',t',\rho'-s')$.

\begin{defn}
The \emph{intersection numbers} of an attaching sequence $\gamma=c_1,\dots,c_m$ is a 4-tuple of non-negative integers $(a,b,c,d)$, where
\begin{eqnarray*}
a&=&|\{i:y_2\leq c_i<y_3\mbox{ and }y_4\leq c_{\overline{i+1}}<2y_2\}|,\\
b&=&|\{i:y_2\leq c_i <x_1 \mbox{ or } x_2\leq c_i <2y_2\}|,\\
c&=&|\{i:x_1\leq c_i <y_3 \mbox{ or } y_4\leq c_i <x_2\}|,\\
d&=&|\{i:y_1\leq c_i <y_2 \mbox{ or } y_3\leq c_i <y_4\}|,
\end{eqnarray*}
and $\overline{j}=j\mod{m}$. 
\end{defn}

Intersection number $a$ counts the number of subarcs of $\gamma$ that intersect $P$ exactly in their endpoints, both of which belong to $(y_2,y_3)$
And $b$, $c$, $d$ represent the geometric intersection numbers $| \gamma\cap(y_2,x_1)|$, $| \gamma\cap(x_1,y_3)|$, $|\gamma\cap(y_1,y_2)|$, respectively.

\begin{theorem}\label{thm:heegform}
If $(a,b,c,d)$ are intersection numbers of a reduced attaching sequence $\beta$ for the bridge $t_\beta$ of a nontrivial $(1,1)$ knot $K$ in Schubert form $S(r,s,t,\rho)$, then a Heegaard diagram Schubert form $HS(r',s',t',\rho')$ of $K$ is provided by one of the following cases:
\begin{enumerate}[i.]
	\item If $s\neq 0 \neq t$, or $2a\leq b$ and $2a\leq c$, then
	$$
	(r',s',t',\rho')=
	\begin{cases}
	(a,-2a+b,-2a+c,-d) & \text{if $\rho<-r$,}\\
	(a,-2a+b,-2a+c,-2a+d) & \text{if $\rho\geq -r$.}
	\end{cases}
	$$
		
	\item If $s=0<t$ and $2a>b$, then
	$$	
	(r',s',t',\rho')=
	\begin{cases}
	(-a+b,-2a+c,2a-b,2a-2b-d) & \text{if $\rho< -r$,}\\
	(-a+b,-2a+c,2a-b,-2b+d) & \text{if $\rho\geq -r$.}
	\end{cases}
	$$
	
	\item If $s>0=t$ and $2a>c$, then
	$$	
	(r',s',t',\rho')=
	\begin{cases}
	(-a+c,2a-c,-2a+b,a-c-d) & \text{if $\rho< -r$,}\\	(-a+c,2a-c,-2a+b,d) & \text{if $\rho\geq -r$.}
	
	\end{cases}
	$$
	
	\item If $s=0=t$ and $2a>c\geq b$, then $(r',s',t',\rho')=$
	$$
	\begin{cases}
		(-a+c,b-c-\overline{b-2a},\overline{b-2a},2a-c-d-\overline{b-2a}) & \text{if $\rho< -r$,}\\
	(-a+c,\overline{2a-c},b-c-\overline{2a-c}, -b+d+\overline{2a-c}) & \text{if $\rho\geq -r$,}
	\end{cases}
	$$
	where $\overline{x}=x\mod{b-c}$.

	\item If $s=0=t$ and $2a>b>c$, then $(r',s',t',\rho')=$
	$$
	\begin{cases}
	(-a+b,-b+c-\overline{2a-b},\overline{2a-b},2a-2b+c-d-\overline{2a-b}) & \text{if $\rho< -r$,}\\
	(-a+b,-b+c-\overline{2a-b},\overline{2a-b},-b+d+\overline{2a-b}) & \text{if $-r\leq\rho<0$,}\\
		(-a+b,\overline{-2a+b},-b+c-\overline{-2a+b},-b+d+\overline{-2a+b}) & \text{if $\rho>0$,}
	\end{cases}
	$$
	where $\overline{x}=x\mod{-b+c}$.
\end{enumerate}
\end{theorem}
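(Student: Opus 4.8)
The plan is to reduce the statement to a computation of the counting measure of $\beta$ on a Schubert train track, and then to normalize that measure by train track moves whenever the naive assignment fails to be non-negative. First I would record that, by Theorem~\ref{thm:construction}, the reduced attaching circle $\beta$ is embedded in $\hat{\Sigma}$ and disjoint from the bridge $t_\beta$. Since $\beta$ is built from the meridian $\beta_0$ by finger moves along $t_\beta$ (Proposition~\ref{prop:detour}) and $t_\beta\prec\sigma$ by Lemma~\ref{lem:traintrack}, the curve $\beta$ is isotopic into the tie neighborhood $N(\sigma)$ transverse to the ties, hence carried by a Schubert train track; the sole difference from the case of $t_\beta$ is that $\beta$, being closed and disjoint from the basepoints, assigns weight $0$ to the branch $b_5$ incident on the stops $x,y$. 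Thus $w_\beta=(w_1,\dots,w_5)=(r',s',t',\rho',0)$, and by the definition of Heegaard diagram Schubert form the theorem reduces to expressing these weights, together with the supporting track, in terms of the intersection numbers $(a,b,c,d)$.

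Second, I would carry out the generic computation, which is case~(i). Each weight $w_i$ equals the number of subarcs of $\beta$ in the parallel class of branch $b_i$, and each such count is a fixed non-negative integer combination of $a,b,c,d$, determined by the positions of the basepoints $y_1,y_2,x_1,y_3,y_4,x_2$ recorded in Table~\ref{tbl:basepoints} and pinned down by the switch conditions on $\sigma$. Reading these off should give $r'=a$, $s'=b-2a$, $t'=c-2a$, and $\rho'=-d$ when $\rho<-r$ while $\rho'=d-2a$ when $\rho\geq -r$, the shift by $2a$ reflecting the corresponding shift of the basepoint positions in Table~\ref{tbl:basepoints}. The dichotomy $\rho<-r$ versus $\rho\geq -r$ is exactly the dichotomy governing those positions, and the sign $\epsilon$ of the fourth parameter is encoded automatically in the sign of the resulting expression for $\rho'$. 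One then checks that the hypothesis $s\neq 0\neq t$ forces the non-negativity $b-2a\geq 0$ and $c-2a\geq 0$, so that the assignment is a legitimate counting measure; this is the content of the alternative hypothesis $2a\leq b$ and $2a\leq c$.

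Third, I would treat the degenerate configurations $s=0$ or $t=0$ together with $2a>b$ or $2a>c$. Here the formula of case~(i) would return a negative weight ($s'=b-2a<0$, or $t'=c-2a<0$), which signals that a branch has collapsed and that $\beta$ winds repeatedly around it rather than sitting in standard position on a Schubert train track. The remedy is to apply a sequence of slide and split moves, exactly as in the proof of Lemma~\ref{lem:bridgearcs}, returning $\beta$ to standard position on one of $\sigma_{--}$, $\sigma_-$, $\sigma_+$; each split of the offending large branch replaces a pair of weights $(p,q)$ by $(p,q-p)$, so iterating the split is a Euclidean reduction whose net effect is the modular residue $\overline{x}$ defined in cases~(iv) and~(v). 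Cases~(ii) and~(iii) are the milder instances in which a single collapse occurs and no winding survives, while case~(v), where $s=0=t$ and $2a>b>c$, is the fully degenerate situation in which the surviving track, and hence the sign $\epsilon$, genuinely depends on $\rho$ and forces the three-way split. I expect this last bookkeeping to be the main obstacle: one must track the induced measure through the entire chain of splittings and verify, in each subcase of $\rho$, that the reduction terminates on the claimed track with the claimed weight, since both $\epsilon$ and the precise value of $\rho'$ depend on exactly where it stops.
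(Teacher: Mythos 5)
Your overall shape---compute the counting measure of $\beta$ on a carrying track and then normalize in the degenerate cases---matches the paper, and your case~(i) is essentially the paper's: once $\beta\prec\sigma$ is known, the intersection numbers give the linear system $a=w_1$, $b=2w_1+w_2$, $c=2w_1+w_3$, together with a $\rho$-dependent expression for $d$, which is solved for $(r',s',t',\rho')$. But there are two genuine gaps. First, you assert that $\beta$ is isotopic into $N(\sigma)$ because it is produced by finger moves along $t_\beta\prec\sigma$; this is exactly what fails in cases~(ii)--(v), and the paper needs a separate argument (Lemma~\ref{lem:extensions}) that cuts $\hat{\Sigma}$ along the tie neighborhood of $\supp t_\beta$ and the fundamental polygon, checks that every complementary region has at most three vertical sides or passages, and then performs a single splitting of an intermediate track ($\tau'$, $\tau''$, or $\tau'''$ of Figure~\ref{fig:presplitting}) whose direction is decided by comparing $2a$ with $b$ and $c$. ``The naive formula goes negative'' is a symptom, not a proof, of which extension $\tau_2,\dots,\tau_5$ carries $\beta$.

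Second, and more seriously, your normalization mechanism cannot work as stated. Slides and splittings preserve the set of carried curves, so if $\beta$ is carried by $\tau_4$ or $\tau_5$ but not by any Schubert track $\sigma$, no sequence of splittings terminates on $\sigma$ still carrying $\beta$; if your ``Euclidean reduction'' succeeded it would certify that $\beta\prec\sigma$ after all, contradicting the need for cases~(iv)--(v). The modular residues $\overline{x}$ in the paper do not come from iterated splitting but from an integral Dehn twist: the arcs on branch $b_2$ of $\tau_4$ wrap $v_3/v_2$ times around a meridian $\gamma$ parallel to $\mu$, and one applies a $-\lfloor v_3/v_2\rfloor$ (or $-\lceil v_3/v_2\rceil$) twist along $\gamma$. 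This replaces $\beta$ by a non-isotopic curve $\beta'$, which is legitimate only because $\gamma$ is disjoint from $\alpha$, $x$, and $y$, so that $(\Sigma,\alpha,\beta,x,y)$ and $(\Sigma,\alpha,\beta',x,y)$ are equivalent doubly-pointed diagrams for $K$. Likewise cases~(ii) and~(iii) are not ``the same linear algebra with one collapse'': Lemma~\ref{lem:heegform} first slides the basepoint $y$ once around $\Sigma$ along a meridian (with or against its orientation) before the final splitting lands on a subtrack of $\sigma$. In short, the reduction requires moves on the doubly-pointed Heegaard diagram itself, not just carrying-preserving moves on the train track, and your proposal omits them.
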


\begin{proof}

\emph{Case i}. According to Lemma \ref{lem:extensions}, $\beta\prec\sigma$.
The only branch arc of $\sigma$ with boundary in $(y_2,y_3)$ in the fundamental polygon is $b_1$ (see Figure \ref{fig:traintrack}), hence
\begin{equation}
a=w_1=r',
\end{equation}
where $w=w_\beta$ is the counting measure of $\beta$ on $\sigma$.
Intersection numbers $b$, $c$, and $d$ equal the sum of $w$ over the branches of $\sigma$ that are transverse to the intervals $(y_2,x_1)$, $(x_1,y_3)$, and $(y_1,y_2)$, respectively.
Using the switch conditions on $\sigma$, which determine the value of $w$ on $\mathcal{B}(\sigma)$  in terms of $w_1,\dots,w_5$, and the definition of the Heegaard diagram Schubert parameters, we deduce:
\begin{eqnarray}
b&=&2w_1+w_2=2r'+s',\\
c&=&2w_1+w_3=2r'+t',\\
d&=&
\begin{cases}
2w_1+w_4=2r'+\rho' & \text{if $\sigma=\sigma_+$,}\\
(w_1-w_4)+w_1=2r'+\rho' & \text{if $\sigma=\sigma_-$,}\\
(w_4-w_1-w_5)+w_1=-\rho' & \text{if $\sigma=\sigma_{--}$.}
\end{cases}
\end{eqnarray}
We obtain the required $(r',s',t',\rho')$ as the solution to the system of linear equations $(3)$-$(6)$.

\emph{Case ii.} Lemma \ref{lem:extensions} implies that $\beta\prec\tau_2$, where $\tau_2$ is defined by Figure \ref{fig:extensions}. 
The intersection numbers correspond to a system of linear equations in terms of $v_1,\dots,v_5$, where $v=v_\beta$ is the counting measure of $\beta$ on $\tau_2$, 
\begin{eqnarray*}
a&=&v_1+v_2,\\
b&=&2v_1+v_2,\\
c&=&2v_1+2v_2+v_3,\\
d&=&
\begin{cases}
v_2+v_4-v_5 & \text{if $\rho<-r$,}\\
2v_1+v_2-v_4 & \text{if $-r\leq\rho<0$,}\\
2v_1+2v_2+v_4 & \text{if $\rho>0$,}
\end{cases}
\end{eqnarray*}
for which the solution is 
$$
(v_1,\dots,v_4)=
\begin{cases}
(-a+b,2a-b,-2a+c,-2a+b+d) & \text{if $\rho<-r$,}\\
(-a+b,2a-b,-2a+c,b-d) & \text{if $-r\leq\rho<0$,}\\
(-a+b,2a-b,-2a+c,-2a+d) & \text{if $\rho>0$.}
\end{cases}
$$
Lemma \ref{lem:heegform} determines the required Heegaard diagram Schubert form from the measure $v$.  

\emph{Cases iii-v.} These follow from computations similar to \emph{Case ii}, and they are omitted.
\end{proof}
 
\begin{lem}\label{lem:extensions}
Let $\tau_i^{--}$, $\tau_i^-$, $\tau_i^+$ denote the Schubert train tracks $\sigma_{--}$, $\sigma_-$, $\sigma_+$, respectively for $i=1$ and the train tracks depicted in Figure \ref{fig:extensions} for $i=2,3,4,5$.
If $(a,b,c,d)$ are intersection numbers of a reduced attaching sequence $\beta$ for the bridge $t_\beta$ of a non-trivial $(1,1)$ knot $K$ in Schubert form $S(r,s,t,\rho)$, then $\beta\prec\tau$, where

$$
\tau=
\begin{cases}
\tau_1 & \text{if $s>0$ and $t>0$, or $2a\leq b$ and $2a\leq c$,}\\
\tau_2 & \text{if $s=0<t$ and $2a>b$,}\\
\tau_3 & \text{if $s>0=t$ and $2a>c$,}\\
\tau_4 & \text{if $s=0=t$ and $2a>c\geq b$,}\\
\tau_5 & \text{if $s=0=t$ and $2a>b>c$,}\\
	\end{cases}
$$
and $\tau_i$ is the train track $\tau_i^{--}$ if $\rho<-r$, $\tau_i^-$ if $-r\leq\rho<0$, and $\tau_i^+$ if $\rho>0$ for $i=1,\dots,5$. 
\end{lem}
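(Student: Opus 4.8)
The plan is to study the arcs of the attaching curve $\beta$ in the fundamental polygon $P$, exploiting both that $\beta$ is disjoint from $t_\beta$ and that $\beta$ is reduced, and to match their parallel classes with the branches of the asserted train track $\tau$. By Lemma \ref{lem:traintrack} we have $t_\beta\prec\sigma$, so the Schubert train track $\sigma$ records the parallel classes of the subarcs $d_1,\dots,d_n$ of $t_\beta$ in $P$, and which of $\sigma_{--},\sigma_-,\sigma_+$ occurs is governed by the sign of $\rho$ relative to $-r$. Cutting $P$ along $t_\beta$ (together with $\mu,\lambda$) yields finitely many complementary regions, and since $\beta\cap t_\beta=\emptyset$, every arc of $\beta\cap P$ lies in the union of these regions.

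First I would use the reduced hypothesis to constrain these arcs. If some arc $d_j$ of $\beta$ failed to separate a pair of endpoints of $t_\beta$ along $\partial P$, then one of the two complementary intervals of $\partial P$ bounded by its endpoints would be disjoint from $t_\beta$, so $d_j$ together with that interval would bound a disk in $\Sigma-t_\beta$, and the corresponding pair of terms would be inessential; Lemma \ref{lem:inessentialpairs} then contradicts reducedness. Hence every arc of $\beta$ is essential, and essential arcs disjoint from $t_\beta$ fall into finitely many parallel classes in the complement of $t_\beta$. These parallel classes are the branches of a train track $\tau$ embedded in the complement of $t_\beta$, and $\beta$ is carried by $\tau$; the remaining task is to identify $\tau$ with the appropriate $\tau_i$ of Figure \ref{fig:extensions}.

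The identification is a case analysis driven by the intersection numbers $(a,b,c,d)$, which count how $\beta$ threads the slots between strands of $t_\beta$ across the intervals $(y_2,x_1)$, $(x_1,y_3)$, $(y_1,y_2)$ and within the region $(y_2,y_3)$. When $s>0$ and $t>0$, both the $s$- and $t$-strands of $t_\beta$ are present and block the shortcut arcs, forcing $\beta$ into the pattern carried by $\sigma=\tau_1$. The same conclusion holds whenever $2a\leq b$ and $2a\leq c$: these inequalities are precisely the requirement that the weights $b-2a$ and $c-2a$ forced on $\sigma$ by the switch conditions (on the branches transverse to $(y_2,x_1)$ and $(x_1,y_3)$) be non-negative, which is necessary for $\beta\prec\sigma$. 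In either situation the superscript ($--$, $-$, or $+$) is inherited directly from which of $\sigma_{--},\sigma_-,\sigma_+$ carries $t_\beta$.

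The remaining cases arise when $s=0$ or $t=0$, so that a strand of $t_\beta$ is absent and an extra slot opens; combined with the relevant failure of the inequalities ($2a>b$, $2a>c$, and the comparison of $b$ with $c$), this permits $\beta$ to be carried by the enlarged tracks $\tau_2,\dots,\tau_5$. Here I would verify slot by slot that the essential arcs of $\beta$ populate exactly the branches of the claimed $\tau_i$, and that the induced weights are non-negative and satisfy the switch conditions. The main obstacle is the fully degenerate case $s=0=t$ (cases iv and v), where both extra slots open and $\beta$ may wind repeatedly around the torus; controlling how many times it wraps—information encoded in the modular reductions $\overline{x}=x\mod{b-c}$ and $\overline{x}=x\mod{-b+c}$—requires a careful count of the parallel arcs in each slot, together with a check that the resulting counting measure is the support of $\beta$ rather than a proper refinement. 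Establishing that this winding is captured precisely by $\tau_4$ and $\tau_5$, and correctly distributed between their branches according to whether $c\geq b$ or $b>c$, is the crux of the argument.
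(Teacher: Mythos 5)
Your proposal follows essentially the same route as the paper: restrict $\beta$ to the complement of (a tie neighborhood of the support of) $t_\beta$, use reducedness to discard inessential arcs, organize the remaining arcs into parallel classes forming an extension of $\supp t_\beta$, and decide between $\sigma$ and $\tau_2,\dots,\tau_5$ by a case analysis on $(s,t)$ together with the sign of $2a-b$ and $2a-c$ (which the paper implements as left versus right splittings of intermediate tracks $\tau',\tau'',\tau'''$ at their large branches). One small correction: the modular reductions $\overline{x}=x\bmod(b-c)$ that you identify as the crux are not part of this lemma at all --- for the carrying statement the winding simply appears as a large weight on a single branch of $\tau_4$ or $\tau_5$, and the reduction by Dehn twists is deferred to Lemma~\ref{lem:heegform}.
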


\begin{figure}[ht!] 
\labellist 
\footnotesize\hair 2pt 
\pinlabel $b_1$ at 18 565
\pinlabel $b_2$ at 115 529
\pinlabel $b_3$ at 94 596
\pinlabel $b_4$ at 49 558
\pinlabel $b_5$ at 129 578
\pinlabel $\tau^{--}_2$ at 88 507 
\pinlabel $b_1$ at 195 570
\pinlabel $b_2$ at 290 529
\pinlabel $b_3$ at 268 599
\pinlabel $b_4$ at 234 628
\pinlabel $b_5$ at 304 580
\pinlabel $\tau^{-}_2$ at 259 507 
\pinlabel $b_1$ at 370 570
\pinlabel $b_2$ at 465 529
\pinlabel $b_3$ at 414 606
\pinlabel $b_4$ at 454 576
\pinlabel $b_5$ at 475 585
\pinlabel $\tau^{+}_2$ at 434 507
\pinlabel $b_1$ at 18 428
\pinlabel $b_2$ at 45 401
\pinlabel $b_3$ at 119 455
\pinlabel $b_4$ at 63 417
\pinlabel $b_5$ at 122 424
\pinlabel $\tau^{--}_3$ at 88 345
\pinlabel $b_1$ at 194 427
\pinlabel $b_2$ at 215 402
\pinlabel $b_3$ at 298 452
\pinlabel $b_4$ at 237 417
\pinlabel $b_5$ at 298 424
\pinlabel $\tau^{-}_3$ at 259 345
\pinlabel $b_1$ at 371 424
\pinlabel $b_2$ at 396 401
\pinlabel $b_3$ at 470 457
\pinlabel $b_4$ at 446 440
\pinlabel $b_5$ at 473 426
\pinlabel $\tau^{+}_3$ at 434 345 
\pinlabel $b_1$ at 12 270
\pinlabel $b_2$ at 38 254
\pinlabel $b_3$ at 119 295
\pinlabel $b_4$ at 64 257
\pinlabel $b_5$ at 123 263
\pinlabel $\tau^{--}_4$ at 88 183
\pinlabel $b_1$ at 186 271
\pinlabel $b_2$ at 266 289
\pinlabel $b_3$ at 279 204
\pinlabel $b_4$ at 236 255
\pinlabel $b_5$ at 297 264
\pinlabel $\tau^{-}_4$ at 259 183
\pinlabel $b_1$ at 362 271
\pinlabel $b_2$ at 411 292
\pinlabel $b_3$ at 466 204
\pinlabel $b_4$ at 449 276
\pinlabel $b_5$ at 495 275
\pinlabel $\tau^{+}_4$ at 434 183
\pinlabel $b_1$ at 12 109
\pinlabel $b_2$ at 41 102
\pinlabel $b_3$ at 115 136
\pinlabel $b_4$ at 66 80
\pinlabel $b_5$ at 122 100
\pinlabel $\tau^{--}_5$ at 88 21
\pinlabel $b_1$ at 186 109
\pinlabel $b_2$ at 266 134
\pinlabel $b_3$ at 284 41
\pinlabel $b_4$ at 235 89
\pinlabel $b_5$ at 297 101
\pinlabel $\tau^{-}_5$ at 259 21
\pinlabel $b_1$ at 362 109
\pinlabel $b_2$ at 399 130
\pinlabel $b_3$ at 466 41
\pinlabel $b_4$ at 453 98
\pinlabel $b_5$ at 485 103
\pinlabel $\tau^{+}_5$ at 434 21
\endlabellist 
\centering 
\includegraphics[width=\textwidth]{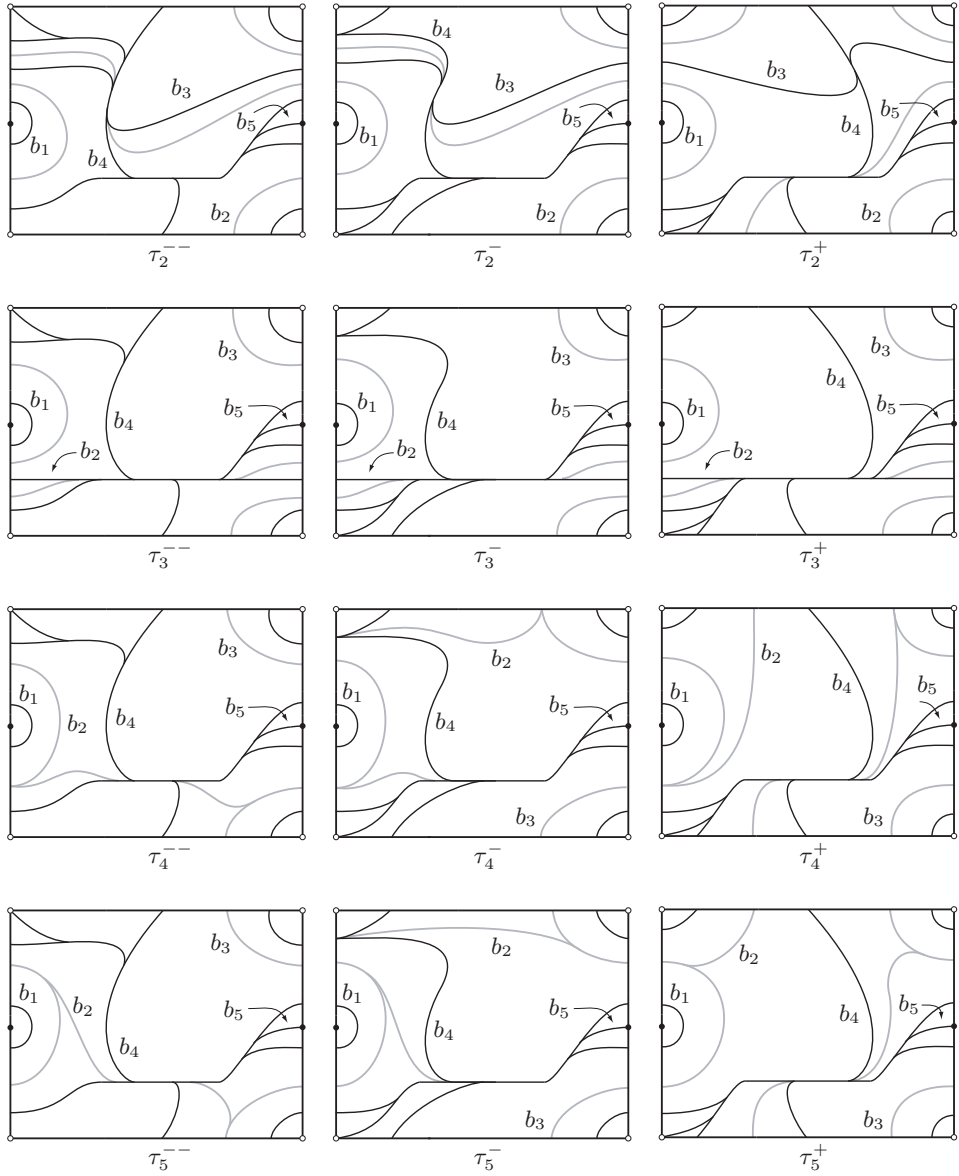} 
\caption{Maximal extensions $\tau=\tau_2,\tau_3,\tau_4,\tau_5$.\label{fig:extensions}}
\end{figure} 

\begin{proof}

Given that $\beta$ is an attaching sequence for $t_\beta$, $\beta$ determines (up to isotopy) an attaching circle (that we denote by $\beta$ also) that is disjoint from $t_\beta$.
Let $N=N(\supp t_\beta)$ be a tie neighborhood of the support $\supp t_\beta<\sigma$. 
Since $\beta\cap t_\beta=\emptyset$, we may assume that $\beta\cap \overline{N}$ consists of properly embedded arcs in $\overline{N}$, each of which passes between distinct vertical sides of $N$.
Furthermore, we take $\beta$ to be transverse to the ties of $N$ since $N$ is comprised of rectangles.

Given that $K$ is a nontrivial knot, the twist parameter $\rho$ is nonzero.
Suppose $\rho>0$.

If $s>0$ and $t>0$, then a component of $\Sigma-N$ is a polygon with at most three vertical sides. 
If an arc of $\beta\setminus N$ passes between two vertical sides of $N$ that are adjacent to a horizontal side in the frontier of $N$, then the arc is carried by $\supp t_\beta$. 
Otherwise there is an boundary arc of $\partial \hat{\Sigma}$ separating the vertical sides in the frontier of $N$ (this case corresponds to $r=0$), and the arc is carried by the extension $\supp t_\beta<\sigma$.
In either case $\beta\prec\sigma$.

If $s=0<t$, then there is a component of $\sigma-N$ with four vertical sides, and there may be an arc of $\beta\setminus N$ which is not parallel to a horizontal or terminal side. 
The hypothesis that $\beta$ is reduced implies, however, that each arc of $\beta\setminus (N\cup P)$ passes between distinct vertical sides of $N$ or components of $P\setminus N$, which we call \emph{passages}. 
Each component of $\Sigma\setminus (N\cup P)$ has at most three vertical sides or passages.
Thus, $\beta$ is carried by the extension $\supp t_\beta<\tau'$ obtained by adding branches between those vertical sides and passages not already connected by a horizontal side, provided we join all branches incident on a single passage. 
See Figure \ref{fig:presplitting}. 
Let $w=w_\beta$ be the counting measure of $\beta$ on $\tau'$.
From the definition of the intersection numbers and the switch conditions, we deduce $a=w_1$,  $b=w_1+w_2$, and $c=2w_1+w_3$.
If $2a\leq b$ ($2a>b$), then $w_1\leq w_2$ ($w_1>w_2$), and $\beta$ is carried by the right (left) splitting of $\tau'$ at branch $b_6$.
The right splitting is equivalent to $\sigma$ whereas the left splitting is the train track $\tau_2^+$ as required.

\begin{figure}[ht!] 
\labellist 
\footnotesize\hair 2pt 
\pinlabel $b_1$ at 39 86
\pinlabel $b_2$ at 69 79
\pinlabel $b_3$ at 58 114
\pinlabel $b_4$ at 105 71
\pinlabel $b_5$ at 29 36
\pinlabel $b_6$ at 142 44
\pinlabel $\tau'$ at 88 15 
\pinlabel $b_1$ at 224 96
\pinlabel $b_2$ at 217 69
\pinlabel $b_3$ at 230 129
\pinlabel $b_4$ at 266 95
\pinlabel $b_5$ at 322 93
\pinlabel $b_6$ at 186 125
\pinlabel $\tau''$ at 260 15 
\pinlabel $b_1$ at 372 84
\pinlabel $b_2$ at 416 85
\pinlabel $b_3$ at 383 132
\pinlabel $b_4$ at 455 119
\pinlabel $b_5$ at 493 92
\pinlabel $b_6$ at 494 54
\pinlabel $b_7$ at 362 107
\pinlabel $\tau'''$ at 432 15
\endlabellist 
\centering 
\includegraphics[width=\textwidth]{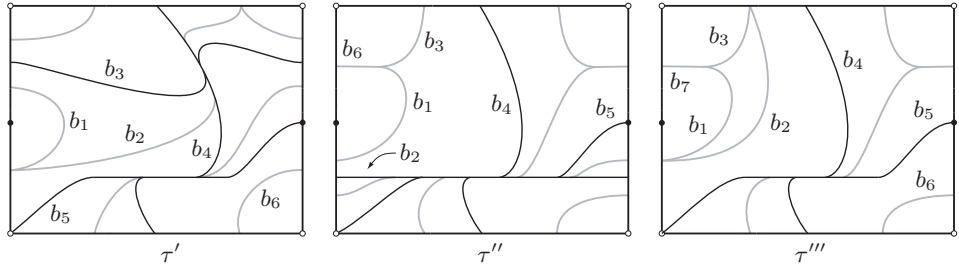} 
\caption{Extensions $\tau'$, $\tau''$, $\tau'''$ for $\rho>0.$\label{fig:presplitting}}
\end{figure}  

If $s>0=t$, a similar argument shows that $\beta$ is carried by the extension $\supp t_\beta<\tau''$ shown in Figure \ref{fig:presplitting}.
In this case $2a\leq b$.
If $2a\leq c$, then $\beta$ is carried by $\sigma$, which is the left splitting of $\tau''$ at $b_6$.
Otherwise $2a>c$, and $\beta$ is carried by $\tau_3^+$, which is the right splitting at $b_6$.

If $s=0=t$, then each component of $\Sigma\setminus (N\cup P)$ still has at most three vertical sides or passages, and $\beta$ is carried by the extension $\tau'''$ (Figure \ref{fig:presplitting}, right) which is defined in a manner analogous to $\tau'$ and $\tau''$. 
If $2a\leq b$ and $2a\leq c$ then $\beta$ is carried by the track resulting from splitting $\tau'''$ right at $b_6$ and left at $b_7$, which is equivalent to $\sigma$. 
If $2a>c\geq b$, then a left splitting at $b_6$ results in a train track equivalent to $\tau_4^+$, which carries $\beta$.
If $2a>b>c$, then $\beta$ is carried by the train track obtained by splitting $\tau'''$ right at $b_7$, which is equivalent to $\tau_5^+$.

In the remaining cases, $\rho<-r$ and $-r\leq\rho<0$, similar arguments demonstrate that $\beta$ is carried by $\tau$ as claimed.
\end{proof}

\begin{lem}\label{lem:heegform}
If $v=v_\beta$ is the counting measure of $\beta$ on the train track $\tau\neq\sigma$, where $\beta$ is an attaching circle of a $(1,1)$ knot $K$ with Schubert form $S(r,s,t,\rho)$, then a Heegaard diagram Schubert form $HS(r,'s',t',\rho')$ for $K$ is as follows.  
Let $\overline{x}$ denote $x\mod{v_2}$.

If $\tau=\tau_2$, then		
$$
(r',s',t',\rho')=
\begin{cases}

(v_1,v_3,v_2,-v_1-v_2-v_4) & \text{if $\rho<0$,}\\
(v_1,v_3,v_2,-2v_1+v_4) & \text{if $\rho>0$.}\\
\end{cases}
$$

If $\tau=\tau_3$, then
$$
(r',s',t',\rho')=
\begin{cases}
(v_1,v_3,v_2,-v_4-v_3-v_1) & \text{if $\rho<-r$,}\\
(v_1,v_3,v_2,2v_1+v_3-v_4) & \text{if $-r\leq\rho<0$,}\\
(v_1,v_3,v_2,2v_1+v_3+v_4) & \text{if $\rho\geq -r$.}
\end{cases}
$$

If $\tau=\tau_4$, then 		
$$
(r',s',t',\rho')=
\begin{cases}
(v_1,v_2-\overline{v_2-v_3},\overline{v_2-v_3},-\overline{v_2-v_3}-v_4) & \text{if $\rho<-r$},\\
(v_1,\overline{v_3},v_2-\overline{v_3},-v_2+\overline{v_3}-v_4) & \text{if $-r\leq\rho<0$,}\\
(v_1,\overline{v_3},v_2-\overline{v_3},v_2+\overline{v_3}+v_4) & \text{if $\rho>0$.}
\end{cases}
$$

If $\tau=\tau_5$, then 
$$
(r',s',t',\rho')=
\begin{cases}
(v_1,v_2-\overline{v_3},\overline{v_3},-\overline{v_3}-v_4) & \text{if $\rho<-r$,}\\
(v_1,v_2-\overline{v_3},\overline{v_3},-v_2+\overline{v_3}-v_4) & \text{if $-r\leq\rho<0$,}\\
(v_1,\overline{v_2-v_3},v_2-\overline{v_2-v_3},v_2+\overline{v_2-v_3}+v_4) & \text{if $\rho>0$.}
\end{cases}
$$
\end{lem}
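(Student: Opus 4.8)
The plan is to read the Heegaard diagram Schubert form off the Schubert train track, exactly as its definition requires: the output $(r',s',t',\rho')$ is nothing but the counting measure $(w_1,w_2,w_3,\epsilon w_4)$ of $\beta$ on the appropriate $\sigma\in\{\sigma_{--},\sigma_-,\sigma_+\}$, with $\epsilon=+1$ precisely when the terminal track is $\sigma_+$. Since by hypothesis $\beta$ is carried by $\tau\neq\sigma$ with measure $v$, the task reduces to exhibiting, for each $\tau=\tau_2,\tau_3,\tau_4,\tau_5$ and each sign regime of $\rho$, an explicit sequence of slides, folds, and splits realizing the equivalence of the measured tracks $(\tau,v)$ and $(\sigma,w)$, and then propagating the measure through that sequence using the rules of Section~\ref{sec:prelims}: a slide gives $w_1'=w_2+w_3$, a fold likewise sums the two small-end weights, and a split at a large branch replaces its weight by $|w_2-w_3|$ with the direction (left or right) forced by the sign of $w_2-w_3$. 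Because $\beta$ is a closed curve disjoint from the stops $x,y$, the weight on $b_5$ is $0$ throughout, matching the $\sigma$-measure $(r',s',t',\rho',0)$ in the definition of $HS$.

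For $\tau=\tau_2$ and $\tau=\tau_3$ I would give a bounded sequence of moves: a few slides and folds collapsing the extra complementary regions of $\tau_i$ in Figure~\ref{fig:extensions}, followed by a single terminal split at the resulting large branch. The switch conditions on $\tau_i$ express every branch weight in terms of $(v_1,\dots,v_5)$, so propagating $v$ through this short sequence yields the asserted linear formulas; in particular $r'=v_1$, while the interchange $s'=v_3$, $t'=v_2$ records the single fold that merged the two small branches. The sign of $\rho$ dictates which of $\sigma_{--},\sigma_-,\sigma_+$ is reached and hence both the direction of the terminal split and the value of $\epsilon$, accounting for the three-way case split in the $\tau_3$ formula and the sign changes in $\rho'$.

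The cases $\tau=\tau_4$ and $\tau=\tau_5$, where $s=t=0$, are the crux and are responsible for the modular reduction $\overline{x}=x\bmod v_2$. Here the attaching circle wraps repeatedly through a cylindrical region of $\tau_i$, so a single large branch must be split many times, each split peeling off one wrap and decrementing a weight, with the split directions following a Euclidean-type pattern. The main obstacle is bookkeeping this iterated splitting: I would show by induction on the wrap count that after each full cycle the measured track returns to a configuration of the same combinatorial type with the governing weight reduced by $v_2$, so that the net effect is reduction of $v_3$ (respectively $v_2-v_3$) modulo $v_2$, which is where $\overline{v_3}$ and $\overline{v_2-v_3}$ enter and why $r'=v_1$ and $s'+t'=v_2$ persist. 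The delicate endpoint of the induction is to verify that the terminal track is exactly the $\sigma_{--},\sigma_-,\sigma_+$ prescribed by the sign of $\rho$, so that $\epsilon$ and the slot receiving each residual are correct; once the combinatorial type at termination is pinned down, the stated values of $\rho'$ and the remaining coordinates follow by direct substitution.
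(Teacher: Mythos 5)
Your central reduction is the claim that it suffices to exhibit a sequence of slides, folds, and splits realizing an equivalence of measured train tracks $(\tau,v)\sim(\sigma,w)$. That reduction fails, and the failure is the whole content of the lemma. Train track moves and isotopies in $\hat{\Sigma}$ preserve the isotopy class (and in particular the homology class) of the carried curve, but for $\tau=\tau_4,\tau_5$ the passage from $v_3$ to $\overline{v_3}=v_3\bmod v_2$ necessarily changes the curve: the paper realizes it by a $-\lfloor v_3/v_2\rfloor$ Dehn twist along a meridian $\gamma$ parallel to $\mu$, which sends $[\beta]$ to $[\beta]+ (\beta\cdot\gamma)\lfloor v_3/v_2\rfloor[\gamma]$ in $H_1(\Sigma)$, and $\beta\cdot\gamma=\beta\cdot\alpha=\pm1$ since $(\Sigma,\alpha,\beta)$ is a Heegaard diagram for $S^3$. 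So whenever $v_3\geq v_2$ the target curve is not even homologous to $\beta$, and no amount of ``Euclidean-type'' iterated splitting can produce it. What makes the twist legitimate is not train track combinatorics but the observation that $\gamma$ is disjoint from $\alpha$, $x$, and $y$, so the twisted diagram $(\Sigma,\alpha,\beta',x,y)$ is compatible with the same knot $K$; the definition of $HS$ only asks for \emph{some} such compatible diagram with $\beta'\prec\sigma$. This knot-preservation step is the missing idea in your proposal, and without it the appearance of the modular reduction cannot be explained.

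The cases $\tau_2,\tau_3$ have the same defect in milder form. By the proof of Lemma~\ref{lem:extensions}, $\tau_2^+$ and $\sigma$ are the two opposite splittings of a common track $\tau'$ at a large branch, and $\beta$ lands on $\tau_2^+$ precisely because $w_1>w_2$ forces the left splitting; a curve carried by the left splitting with positive weight on the new branch is not carried by the right one, so $(\tau_2,v)$ is not equivalent to any $(\sigma,w)$ by moves in $\hat{\Sigma}$. The paper instead slides the basepoint $y$ once around $\Sigma$ along a $\mu$-parallel loop --- a point-pushing homeomorphism of the twice-punctured torus, not an isotopy of $\hat{\Sigma}$ --- and only then performs a single split to land on a subtrack of $\sigma$. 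Your ``few slides and folds followed by a terminal split'' cannot substitute for this. To repair the proof you would need to add, for each case, the appropriate diagram-preserving operation (basepoint push for $\tau_2,\tau_3$; meridional Dehn twist for $\tau_4,\tau_5$) together with the argument that it preserves the knot type, and only then read off the measure on $\sigma$.
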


\begin{proof}
Suppose $\tau=\tau_2$ and $\rho>0$. 
Apply an isotopy sliding the basepoint $y$ once around $\Sigma$ along a circular path parallel to $\mu$ and with the same orientation as $\mu$.
See Figure \ref{fig:large}.   
\begin{figure}[b!] 
\labellist 
\footnotesize\hair 2pt 
\pinlabel $b_1$ at 11 42
\pinlabel $b_2$ at 13 15
\pinlabel $b_3$ at 56 92
\pinlabel $b_4$ at 105 58
\pinlabel $b_5$ at 141 35
\pinlabel $b_1$ at 189 44
\pinlabel $b_2$ at 192 86
\pinlabel $b_3$ at 208 16
\pinlabel $b_4$ at 269 91
\pinlabel $b_5$ at 314 36
\pinlabel $b_1$ at 370 73
\pinlabel $b_2$ at 387 101
\pinlabel $b_3$ at 375 41
\pinlabel $b_4$ at 454 109
\pinlabel $b_5$ at 491 41
\endlabellist 
\centering 
\includegraphics[width=\textwidth]{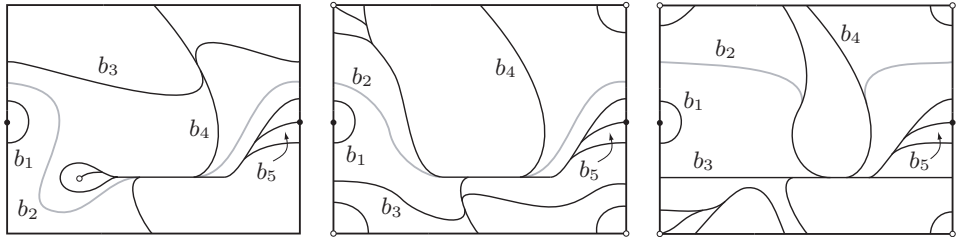} 
\caption{Isotopy of $\tau=\tau_2^+$.\label{fig:large}}
\end{figure} 
Split the large branch adjacent to $b_4$, and the result is a subtrack of $\sigma$ on which the counting measure of $\beta$ takes the values $w_1=v_1$, $w_2=v_3$, $w_3=v_2$, and $w_4=v_4-2v_1$, as required.
The same isotopy treats cases with $\rho<0$.

If $\tau=\tau_3$, slide the basepoint $y$ once around $\Sigma$ along a path parallel to $\mu$ and \emph{against} its orientation. 
The Heegaard diagram Schubert form follows by a splitting as in the above case. 

If $\tau=\tau_4$, then arcs of $\beta$ carried by $b_2$ wrap $v_3/v_2$-times around a meridian $\gamma$ parallel to $\mu$ with the same orientation.
The case $\rho>0$ is shown in Figure \ref{fig:dehntwist}, left, where, for sake of clarity, we have isotoped the basepoint $y$ toward the center of the fundamental polygon.
In this case, if we apply a $-\lfloor v_3/v_2 \rfloor$ Dehn twist to $\beta$ along $\gamma$, where $\lfloor x\rfloor$ denotes the greatest integer less than or equal to $x$, then the result $\beta'$ is carried by the train track $\tau'$ shown in Figure \ref{fig:dehntwist}, center.
Since $\gamma$ is disjoint from $\alpha$, $x$, and $y$, and the twist is integral, the doubly-pointed Heegaard diagrams $(\Sigma,\alpha,\beta,x,y)$ and $(\Sigma,\alpha,\beta',x,y)$ are equivalent; i.e., they represent the same knot in $S^3$.
If $v'$ is the counting measure of $\beta'$ induced by $v$, then $(v'_1,\dots,v'_4)=(v_1,v_2,\overline{v_3},v_4)$, where $\overline{x}=x\mod v_2$.
By isotoping $\tau'$ into a neighborhood of $\sigma$, as shown in Figure \ref{fig:dehntwist}, we obtain the Heegaard diagram Schubert form $HS(v_1',v_3',v_2'-v_3',v_2'+v_3'+v_4')=HS(v_1,\overline{v_3},v_2-\overline{v_3},v_2+\overline{v_3}+v_4)$, as required.
The case $-r\leq\rho<0$ follows similarly. 
If $\rho<-r$, an additional twist is required for Schubert form; after a $-\lceil v_3/v_2\rceil$ Dehn twist the given Heegaard diagram Schubert form is obtained, where $\lceil x\rceil$ denotes the smallest integer greater than $x$.

\begin{figure}[ht!] 
\labellist 
\footnotesize\hair 2pt 
\pinlabel $b_1$ at 66 55
\pinlabel $b_2$ at 69 109
\pinlabel $b_3$ at 47 19
\pinlabel $b_4$ at 117 90
\pinlabel $b_5$ at 141 42
\pinlabel $\gamma$ at 12 29
\pinlabel $b'_1$ at 242 56
\pinlabel $b'_2$ at 245 111
\pinlabel $b'_3$ at 203 24
\pinlabel $b'_4$ at 292 89
\pinlabel $b'_5$ at 315 42
\pinlabel $b'_1$ at 371 73
\pinlabel $b'_2$ at 424 111
\pinlabel $b'_3$ at 372 47
\pinlabel $b'_4$ at 437 54
\pinlabel $b'_5$ at 488 45
\endlabellist 
\centering 
\includegraphics[width=\textwidth]{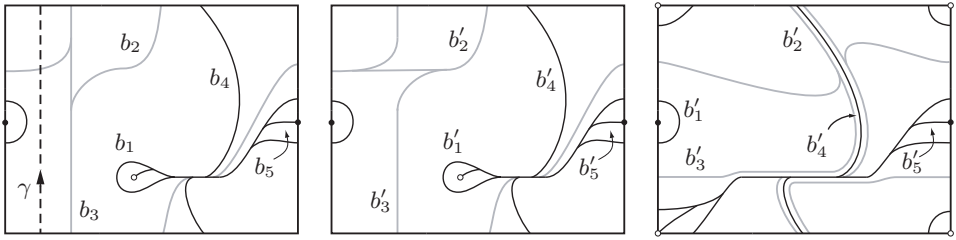} 
\caption{Dehn twist of $\tau=\tau_4^+$ along $\gamma$.}\label{fig:dehntwist}
\end{figure} 

If $\tau=\tau_5$ and $\rho<-r$ ($\rho\geq -r$), then a \emph{positive} $\lfloor v_3/v_2\rfloor$ ($\lceil v_3/v_2 \rceil$) Dehn twist along $\gamma$ takes $\beta$ to a neighborhood of $\sigma$. 
The required Heegaard diagram Schubert parameters follow by an argument similar to the one applied in the $\tau_4$ case.
\end{proof}

\begin{exmp}[Trefoil Heegaard diagram Schubert form]

From Example \ref{ex:trefoilsequence}, an attaching sequence for the trefoil with Schubert form $S(0,0,2,2)$ is 
$$
\beta=(6.5, 2.5, 5.5, 1.5, 4.5, 13.5, 9.5, 13.5, 9.5).
$$
Given the trefoil basepoints $(y_1,y_2,x_1,y_3,y_4,x_2)=(0,3,4,7,10,13)$ from Example \ref{ex:trefoilbridge}, the intersection numbers of $\beta$ are $(a,b,c,d)=(1,2,3,4)$. 
Since $2a\leq b$, $2a\leq c$, and $\rho\geq -r$,  a Heegaard diagram Schubert form for the trefoil is
$$
HS(r',s',t',\rho')=(1,0,1,2).
$$
To verify that this corresponds to a Heegaard diagram compatible with the trefoil, notice that an isotopy sliding $y$ in the direction of the path $\mu\lambda^{-1}$ takes $\beta$ into the shape of Figure \ref{fig:trefoil}.
\end{exmp}

\end{document}